\documentclass [11pt,twoside,a4paper]{article}
\usepackage{amsfonts}
\usepackage{amsthm}
\usepackage{amsmath}
\usepackage{amscd}
\usepackage{amsbsy}            %
\usepackage{psfrag}            %
\usepackage{epsf}              
\usepackage{graphicx}          %
\usepackage{makeidx}           %
\usepackage{color}             %
\usepackage{fancyhdr}

\setlength{\arraycolsep}{0.5mm}
\newtheorem{thm}{Theorem}[section]

\newtheorem{lem}[thm]{Lemma}
\newtheorem{definition}[thm]{Definition}

\newtheorem{prop}[thm]{Proposition}
\newtheorem{cor}[thm]{Corollary}

\newtheorem{conj}[thm]{Conjecture}


\newcommand{\cov}{\texttt{cov}}
\newcommand{\tr}{\texttt{Tr}}

\newcommand{\abs}[1]{\left\vert#1\right\vert}
\newcommand{\norm}[1]{\parallel\! #1\! \parallel}
\newcommand{\normal}{\mathcal{N}}
\newcommand{\seq}[1]{\left<#1\right>}
\newcommand{\set}[1]{\left\{#1\right\}}

\newcommand{\trans}{\texttt{Trans}}
\newcommand{\var}{\texttt{Var}}
\newcommand{\vecc}{\texttt{vec}}

\newcommand{\al}{\alpha}

\newcommand{\be}{\beta}
\newcommand{\ga}{\gamma}

\newcommand{\om}{\omega}
\newcommand{\Om}{\Omega}
\newcommand{\la}{\lambda}

\newcommand{\p}{\prime}
  \newcommand{\ip}{i^{\p}}
  \newcommand{\jp}{j^{\p}}

\newcommand{\si}{\sigma}
\newcommand{\Sig}{\Sigma}
\newcommand{\tta}{\theta}
\newcommand{\ifff}{\Leftrightarrow}

\newcommand{\A}{\mathcal{A}}
\newcommand{\B}{\mathcal{B}}
\newcommand{\C}{\mathcal{C}}
\newcommand{\D}{\mathcal{D}}

\newcommand{\X}{\mathcal{X}}
\newcommand{\Y}{\mathcal{Y}}
\newcommand{\Z}{\mathcal{Z}}
\newcommand{\K}{\mathcal{K}}
\newcommand{\M}{\mathcal{M}}

\newcommand{\N}{\mathcal{N}}
\newcommand{\cH}{\mathcal{H}}

\newcommand{\ST}{\mathcal{S\!T}}
\newcommand{\T}{\mathcal{T}}

\newcommand{\bft}{\textbf{t}}

\newcommand{\bv}{\textbf{v}}

\newcommand{\bx}{\textbf{x}}
\newcommand{\bX}{\textbf{X}}

\newcommand{\by}{\textbf{y}}

\newcommand{\bfd}{\rm{d}}

\newcommand{\bfm}{\textbf{m}}


\def\R{\mathbb R}



\newcommand{\mnrt}{$m$th order $n$-dimensional real tensor }
\newcommand{\mnrts}{$m$th order $n$-dimensional real tensors }

\newcommand{\mnsts}{$m$th order $n$-dimensional symmetric tensors }

\newcommand{\beq}{\begin{equation}}
\newcommand{\eeq}{\end{equation}}
\newcommand{\bey}{\begin{eqnarray}}
\newcommand{\eey}{\end{eqnarray}}
\newcommand{\beyy}{\begin{eqnarray*}}
\newcommand{\eeyy}{\end{eqnarray*}}



\newcommand{\mb}[1]{\text{\mathversion{bold}${#1}$}}

\newcommand{\ba}{\begin{array}}
\newcommand{\ea}{\end{array}}
\newcommand{\bt}{\begin{tabular}}
\newcommand{\et}{\end{tabular}}
\newcommand{\etb}{\end{table}}
\newcommand{\bc}{\begin{center}}
\newcommand{\ec}{\end{center}}
\newcommand{\Bea}{\begin{eqnarray*}}
\newcommand{\Eea}{\end{eqnarray*}}

\title{Random Tensors and their Normal Distributions}
\author{Changqing Xu\thanks{Corresponding author. School of Mathematical Sciences, Suzhou University of Science and Technology, Suzhou, China
Email: cqxurichard@mail.usts.edu.cn}   
\and 
Kaijie Xu\thanks{School of Electronic Engineering, Xidian University, 710071, Xi'an, China. Email: kjxu@xidian.edu.cn}
}

  \makeatletter
      \def\@setcopyright{}
      \def\serieslogo@{}
      \makeatother
 \date{\today}
\begin{document}
\maketitle

\begin{abstract}
The main purpose of this paper is to introduce the random tensor with normal distribution, which promotes the matrix normal distribution to a higher order case.  
Some basic knowledge on tensors are introduced before we focus on the random tensors whose entries follow normal distribution. The random tensor 
with standard normal distribution(SND) is introduced as an extension of random normal matrices. As a random multi-array deduced from an affine 
transformation on a SND tensor, the general normal random tensor is initialised in the paper.  We then investigate some equivalent definitions of a normal tensor 
and present the description of the density function, characteristic function, moments, and some other functions related to a random matrix.   A general form of an even-order 
multi-variance tensor is also introduced to tackle a random tensor.  Finally some equivalent definitions for the tensor normal distribution are described.  
\end{abstract}

\noindent \textbf{keywords:} \  Tensor; mixed effect tensor model; parameter estimation; Normal distribution;  Characteristic function.\\
\noindent \textbf {AMS Subject Classification}: \   53A45, 15A69.  \\


\section{Introduction}
\setcounter{equation}{0}

The multivariate statistics have been used in many areas including the medical imaging. The classical treatment of a normal random matrix $X$ is to use the traditional 
multivariate normal distribution after the vectorization of $X$. Basser has studied the high-order diffusion tensor (DT) of water through the magnetic resonance image (MRI), 
called the DT-MRI, since 1999 \cite{Base1999,Base2003,Base2007}.  As Basser pointed out in \cite{Base2003}, the traditional approach to tackle normal random matrices 
does not satisfy the requirement of the DT-MRI for the distribution and moments of both its components and its eigenvalues and eigenvectors. However Basser and his cooperators stay in the matrix style to describe the properties of those tensors, which somehow impeded further investigation of DT-MRI.  \\
\indent  The systematic treatment of multivariate statistics through matrix theory has been developed since 1970s\cite{CC1970,WS1972}. In multivariate statistics, the $k$-moment of a random vector $\bx\in \R^{n}$ is conventionally described by a matrix for any positive integer $k>1$. The \emph{multivariate normal distribution}, also called the \emph{joint normal distribution}, usually deals with random vectors with normal distributions. The study of random 
matrices, motivated by quantum mechanics and wireless communications \cite{CD2011,F2010,M2004,MS2005} etc. in the past 70 years, mainly focuses on the spectrum 
properties\cite{MS2005} and can be used in many areas such as the classical analysis and number theory using enumerative combinatorics\cite{MS2005}, Fredholm 
determinants\cite{M1992}, diffusion processes\cite{BGL2017}, integrable systems\cite{BB1999}, the Riemann Hilbert problem\cite{JLYY2011} and the free probability theory 
in the 1990s. The theory of random matrices in multivariate statistics basically focuses on the distribution of the eigenvalues of the matrices\cite{Tao2012} .\\ 
\indent  Recently we found some 
\indent The need to use the high order tensors has manifested in many areas more than half a century ago, and the recent growing development of multivariate distribution 
theory poses new challenge for finding some novel tools to describe classical statistical concepts e.g. moment, characteristic function and covariance etc. This in turn has 
facilitated the development of higher order tensor theory for multilinear regression model\cite{cook2010} and the higher order derivatives of distribution functions. Meanwhile, 
the description of an implicit multi-relationship among a family of random variables pose a challenge to modern statisticians. The applications of the high order tensors in 
statistics was initialized by Cook etc. \cite{cook2010,cook2013} when the envelope models were established. \\
\indent  In this paper, we first use tensors to express the high order derivatives, which in turn leads to the simplification of the high-order moments and the covariances of a random matrix. We also introduce the normal distributions of a random matrix as well as that of a random tensor. The Gaussian tensors are investigated to extend the random matrix theory. \\  
\indent  By a random vector $\bx:=(x_{1}, \ldots, x_{n})^{\top}\in \R^{n}$ we mean that each component $x_{i}$ is a random variable (r.v.). Here we usually do not distinguish a 
row and a column vector unless specifically mentioned. There are several equivalent definitions for a random vector to be Gaussian.  Given a constant vector $\mu\in \R^{n}$ 
and a positive semidefinite matrix $\Sig\in \R^{n\times n}$. A random vector $\bx\in \R^{n}$ is called a \emph{Gaussian} or \emph{normal} vector with parameter $(\mu, \Sig)$ if 
it is normally distributed with $E[\bx]=\mu$ and $\var(\bx)=\Sig, \al\in \R^{n}$. This is equivalent to a single variable normal distribution of $\al^{\top} \bx$ for all $\al\in \R^{n}$. It 
is obvious from this fact that each component of a normal vector is normal. The converse is however not true.  A random tensor $\A=(A_{i_{1}i_{2}\ldots i_{m}})$ is an 
$m$-order tensor whose entries are random variables.  As a special case, a random matrix is a matrix whose entries are random variables.  \\
\indent The covariance matrix of a random vector $\bx$ restores the variances and covariances of its coordinates and plays a very important role in the statistical analysis. 
However, it cannot demonstrate the multi-variances of a group of variables.  On the other hand, the $k$-moment of a random vector $\bx\in \R^{n}$ can be defined through 
the high-order derivative of its characteristic function, which can be depicted much more clearly by a high-order tensor.  Note that an $m$-order derivative of a function 
$f(\bx)=f(x_{1},x_{2},\ldots, x_{n})$, usually defined as an $n\times n^{m-1}$ matrix, can be expressed as a symmetric $m$-order $n$-dimensional tensor.  A direct bonus of 
a tensor form of derivative is that we can locate and label easily any entry of $\frac{\partial^{m} f}{\partial \bx^{m}}$.\\ 
\indent  We denote $[n]$ for the set $\set{1,2,\ldots, n}$ for any positive integer $n$ and $\R$ the field of real numbers. Throughout we follow the convention to use the lowercase 
italic letters e.g. $a,b,\cdots,$ for scalars, uppercase italic letters e.g. $A,B,\ldots $  for matrices, lowercase boldface upright  letters (e.g. $\bx, \by, \cdots, $ )  for random vectors while 
the lowercase italic letter such as $x,y,\cdots,$ for their values, sample points or observations, the uppercase script letters e.g. $\A, \B, \X, \Y, \cdots $ to denote tensors. We reserve the 
letters $X, Y, Z$ (in either case or style) strictly for random objects and all other letters for deterministic objects.  Finally a moment of a random vector is denoted by $\bfm$ which 
could be either a scalar, a vector, a matrix or a tensor. \\ 
\indent   Given a matrix $A\in \R^{m\times n}$, we use $\vecc(A)$ to denote the $mn$-dimensional column vector $\bv=(v_1,v_2,\ldots, v_{mn})^{\top}$ formed by stacking  
the columns of $A$, i.e.,  $\bv^{\top} =(\beta_1^{\top}, \beta_2^{\top},\ldots, \beta_n^{\top})$  where $A=[\beta_1, \beta_2, \cdots, \beta_n]$.  A tensor $\A$ of size 
 $\bfd:=d_1\times d_2\times \ldots \times d_m$ is an \emph{$m$-way matrix} or an $m-$order tensor.  $\A$ is called an \mnrt\  tensor if  $d_1=\ldots =d_m=[n]$.  
 Let $\T({\rm d})$ be the set of all the $m$-order tensors indexed by $\bfd$, $\T_{m;n}$ be the set of all \mnrts, and $\T_{m}$ be the set of all $m$-order tensors.  
 Thus a scalar, a vector and a matrix is respectively a tensor of order zero, one and two.  For a tensor $\A$, we usually denote by $A_{i_1i_2\ldots i_p}$ or $A_{\si}$ 
 the component of $\A$  associated with the indices $\si:=(i_1,i_2,\ldots, i_p)$.  A tensor $\A\in \T_{m;n}$ is said to be a \emph{symmetric tensor} if each entry of $\A$ is 
 invariant for any permutation of its indices.  An \mnrt $\A\in \T_{m;n}$ is associated with an $m$-order $n$-variate homogeneous polynomial in the form   
\beq\label{def: asspolyn} 
f_{\A}(\bx) = \A\bx^m = \sum_{i_1,i_2,\ldots,i_m} A_{i_1i_2\ldots i_m} x_{i_1}x_{i_2}\cdots x_{i_m} 
\eeq
where the summation is taken over the index set
\[ S(m,n):= \set{(i_1,i_2,\cdots,i_m): i_k\in [n],  \forall k\in [m] } \]
We denote by $\ST_{m;n}$ the set of all \mnsts.  A symmetric tensor $\A\in \ST_{m;n}$ is said to be \emph{positive semidefinite} (PSD)  if  $f_{\A}(\bx):=\A\bx^m>0$
($\ge 0$) for all $0\neq \bx\in \R^n$, and $\A$ is called a \emph{copositive} tensor if  $f_{\A}(\bx)\ge 0$ for all nonnegative vector $\bx$. For the study of the symmetric 
tensors, PSD tensors, the copositive tensors, including their spectrum, decompositions and other properties, we refer the reader to \cite{CS2013, cglm08, qieig2005,qicop2013} and \cite{cglm08}. \\
\indent  Let $I:=I_{1}\times \ldots \times I_{m}$ where each $I_{k}$ represents an index set (usually a set in form $\set{1,2,\ldots, n_{k}}$) and $\A\in  \T({\rm I})$.  
For any $k\in [m]$ and $j\in I_{k}$, $\A$'s $j$-slice along the $k$-mode (denoted by $A^{(k)}[j]$) is defined as an $(m-1)-$order tensor 
$(A_{i_{1}\ldots i_{k-1} j  i_{k+1}\ldots i_{m}})$ where the $k$th subscript of each entry of $\A$ is fixed to be $j\in [I_{k}]$.  
An $m$-order tensor can be sliced into a series of $(m-1)$-order tensors along any of the $m$ modes.   A high order tensor can be flattened or unfolded into a matrix 
by slicing iteratively.  For example, an $m\times n\times p$ tensor $\A$ can be unfolded to be a matrix $A[1]\in \R^{m\times np}$ along the first mode and $A[2]\in \R^{n\times pm}$ if along the second mode. There are ten options to flatten a 4-order tensor $m\times n\times p\times q$  into a matrix: four to reserve one mode and stack the other three and six to group two modes together to form a matrix.\\
\indent  The product of tensors can be defined in many different ways. Given any tensors $\A,\B$ of appropriate size, the $k$-mode contractive product of $\A$ and $\B$
w.r.t. the chosen mode(s). This can be regarded as a generalisation of the matrix product. For more detail, we refer the reader to \cite{qiluo2017} and \cite{CC2010}. \\
\indent  Given a random vector $\bx\in \R^{n}$.  The characteristic function (CF) of $\bx$, is defined by 
\[ \phi_{\bx}(\bft)= E[exp(\imath \bft^\p \bx)], \forall \bft\in \R^n\]
For any positive integer $k$ , the $k$-\emph{moment} of a random vector $\bx$ is defined by 
\beq\label{eq:defkm4vec} 
\bfm_k(\bx) = \frac{1}{\imath^k} \frac{d^k}{d\bft^k} \phi_{\bx} (\bft) |_{\bft=0} 
\eeq
Note that $\bfm_{1}(\bx)=\frac{d}{d\bft} \phi_{\bx} (\bft)\in \R^{n}$ is a vector and $\bfm_{k}(\bx) =\frac{d^k}{d\bft^k} \phi_{\bx} (\bft)\in \R^{n\times n^{k-1}}$ is a matrix 
for each $k>1$ by the conventional definition. \\
\indent To simplify the definition of the $k$-moments of a random vector, we present a tensor form of the high order multivariate function. Let $\by=f(\bx)$ be a mapping 
from $\C^{n}$ to $\C^{m}$, i.e., $\by=(y_{1},\ldots, y_{m})^{\top}\in \C^{m}$ with each components $y_{i}=f_{i}(\bx)$ sufficiently differentiable. Then we denote 
$H(\by,\bx)=(h_{ij})$ as the Jacobi matrix of $\by$ w.r.t. $\bx$ defined by $h_{ij} :=\frac{dy_{i}}{dx_{j}}$ for all $i\in [m], j\in [n]$. Thus  $H(\by,\bx)\in \R^{m\times n}$. 
Now we define the $k$-order differentiation tensor by 
\beq\label{eq:defkdiff}
\cH^{k}(\by, \bx)=(h_{ij_{1}j_{2}\ldots j_{k}})
\eeq  
which is an $(k+1)$-order tensor of size $m\times \overbrace{n\times \ldots \times n}^{k}$ where 
\beq\label{eq:defkdiff2}  
h_{ij_{1}j_{2}\ldots j_{k}} = \frac{\partial^{k} y_{i}}{\partial x_{j_{1}} \partial x_{j_{2}} \ldots \partial x_{j_{k}}}  
\eeq
for any $i\in [m], j_{1},\ldots, j_{k}\in [n]$. Recall that the conventional form for the $k$-order differentiation of a mapping $\by=f(\bx)$ produces an $m\times n^{k}$ matrix 
$H^{k}(\by, \bx)=(h_{ij})$ where $h_{ij}=\frac{\partial^{k} y_{i}}{\partial x_{j_{1}} \partial x_{j_{2}} \ldots \partial x_{j_{k}}}$ with 
$j= \sum\limits_{s=1}^{k} j_{s} n^{k-s}$ ($j=1,2,\ldots, n^{k}$), making the location of each entry ambiguous.\\ 
\indent  In this paper, we use tensor form to simplify high order differentiations. The $k$-moment of a random vector $\bx\in \R^{n}$ is defined by  
\beq\label{eq: ktensorprodx} 
\bfm_{k}[\bx]=E[\bx^{k}]=E[\overbrace{\bx\times \bx\times \ldots \times \bx}^{k}] 
\eeq
This is a natural extension of the $k$-moment in the univariate case since  
\[ \bfm_{1}[\bx]=E[\bx] \in \R^{n}, m_{2}[\bx]=E[\bx \bx^{\top}] \in \R^{n\times n}, \cdots, \bfm_{k}[\bx] =E[\bx^{k}] \in \T_{m;n}. \]
The definition is identical to the one through characteristic function in the tensor form, as in the following. 
\begin{lem}\label{le: equivdef4moment}
Let $k$ be any positive integer and $\bx\in \R^{n}$ be a random vector with characteristic function $\phi_{\bx}(\bft)$ (with $\bft\in \R^{n}$). Then  
\beq\label{eq:equivdef4m} 
E[\bx^k] = \frac{1}{\imath^k} \frac{d^k}{d \bft^k} \phi_{\bx} (\bft) |_{\bft=0}  
\eeq
where $0$ is a zero vector in $\R^{n}$.    
\end{lem}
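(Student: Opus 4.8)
\medskip
\noindent\textit{Proof sketch.} The plan is to differentiate the characteristic function under the expectation sign and then match the resulting tensor with $E[\bx^k]$ coordinate by coordinate. First I would unwind the notation: in \eqref{eq:equivdef4m} the symbol $\frac{d^k}{d\bft^k}\phi_{\bx}(\bft)$ must be read as the differentiation tensor $\cH^k(\phi_{\bx},\bft)$ of \eqref{eq:defkdiff}--\eqref{eq:defkdiff2} applied to the scalar-valued map $\phi_{\bx}$ (the case $m=1$, where the leading mode drops out), i.e. the $k$-order $n$-dimensional tensor whose $(j_1,\ldots,j_k)$-entry is the mixed partial $\frac{\partial^k\phi_{\bx}}{\partial t_{j_1}\cdots\partial t_{j_k}}(\bft)$; and by the definition of the $k$-fold tensor power $\bx^k=\bx\times\cdots\times\bx$, the $(j_1,\ldots,j_k)$-entry of $E[\bx^k]$ is $E[x_{j_1}x_{j_2}\cdots x_{j_k}]$. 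So the lemma reduces to the family of scalar identities
\beq\label{eq:entrywise-moment}
\frac{1}{\imath^k}\,\frac{\partial^k}{\partial t_{j_1}\cdots\partial t_{j_k}}\phi_{\bx}(\bft)\Big|_{\bft=0}=E[x_{j_1}x_{j_2}\cdots x_{j_k}],\qquad j_1,\ldots,j_k\in[n].
\eeq

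Next I would carry out the differentiation. Writing $\phi_{\bx}(\bft)=E[e^{\imath\bft^{\p}\bx}]$ and differentiating, each operator $\partial/\partial t_{j_s}$ brings down a factor $\imath x_{j_s}$, so formally
\beq\label{eq:formal-diff}
\frac{\partial^k}{\partial t_{j_1}\cdots\partial t_{j_k}}e^{\imath\bft^{\p}\bx}=\imath^k\,x_{j_1}x_{j_2}\cdots x_{j_k}\,e^{\imath\bft^{\p}\bx},
\eeq
and, once the derivatives are allowed inside the expectation, $\frac{\partial^k}{\partial t_{j_1}\cdots\partial t_{j_k}}\phi_{\bx}(\bft)=\imath^k E[x_{j_1}\cdots x_{j_k}e^{\imath\bft^{\p}\bx}]$. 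Evaluating at $\bft=0$ and dividing by $\imath^k$ gives \eqref{eq:entrywise-moment}, and reassembling the $n^k$ scalar equalities into a tensor yields \eqref{eq:equivdef4m}.

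The one nonroutine step — and the part I expect to need the most care — is justifying the interchange of $\partial^k/\partial t_{j_1}\cdots\partial t_{j_k}$ with $E[\cdot]$ used to pass from \eqref{eq:formal-diff} to the displayed formula. This is the classical differentiation-under-the-integral argument for characteristic functions: differentiate one coordinate at a time and, at the $\ell$-th stage, dominate the relevant difference quotient (using $|e^{\imath a}-e^{\imath b}|\le|a-b|$) by $|x_{j_1}\cdots x_{j_\ell}|$, which by the AM--GM inequality is at most $\frac{1}{\ell}\big(|x_{j_1}|^{\ell}+\cdots+|x_{j_\ell}|^{\ell}\big)$ and hence integrable provided the moments of $\bx$ up to order $k$ are finite — exactly the hypothesis under which the left-hand side of \eqref{eq:equivdef4m} is well defined in the first place. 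Dominated convergence then legitimises each step, and an induction on $\ell=1,\ldots,k$ completes the justification. As a closing consistency check I would observe that, $\phi_{\bx}$ being $C^k$, its mixed partials commute, and $x_{j_1}\cdots x_{j_k}$ is symmetric in its indices, so both sides of \eqref{eq:equivdef4m} are symmetric $k$-order tensors, in agreement with the tensor form of the $k$-moment introduced above.
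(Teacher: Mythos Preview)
Your proposal is correct and follows essentially the same approach as the paper: compute $\frac{d^k}{d\bft^k}\phi_{\bx}(\bft)=\imath^k E[e^{\imath\bft^{\top}\bx}\bx^k]$ entrywise and evaluate at $\bft=0$. The paper's proof is a one-liner that asserts this identity without justification, so your added care in legitimising the interchange of differentiation and expectation via dominated convergence (and the symmetry check) goes beyond what the paper provides, but the underlying argument is the same.
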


\begin{proof}
It is easy to see from the definition of the characteristic function $\phi_{\bx}(\bft)$ and (\ref{eq:defkdiff}) and (\ref{eq:defkdiff2}) that 
\beq
\frac{d^k}{d \bft^k} \phi_{\bx} (\bft) = \imath^{k} E[\exp\{\imath \bft^{\top}\bx\} \bx^{k}] 
\eeq
Thus (\ref{eq:equivdef4m}) holds. 
\end{proof}

\indent  Similarly we can also simplify the definition of the $k$-\emph{central moment} $\bar{\bfm}_k[\bx]$ by $\bar{\bfm}_k[\bx] = E[(\bx -E[\bx])^k]$. Note that our definition is 
consistent with the traditional one for $k\le 2$.  In the following section, we will extend the $k$-moment, the characteristic function, and the related terminology to the case 
for the random matrices. \\

\section{The tensor forms of derivatives of matrices}
\setcounter{equation}{0}

\indent  Let $\tta:= \set{ \tta_{k}: k=1,2,\ldots, p}$ be a 2-partition of $[2p]$ ($\abs{\tta_{k}}=2$ for each $k$), and let $A^{(k)}\in \R^{m_{k}\times n_{k}}, k\in [p]$. 
We may assume w.l.g. that $\tta_{k}:=\set{a_{k},b_{k}}$ with $a_{k}<b_{k}$ for each $k$. The outer product of the matrix sequence $\set{A^{(k)}}$ along partition 
$\tta$ is the $2p$-order tensor $\A$, denoted $\A=A^{(1)}\times_{\tta_{1}}A^{(2)}\times_{\tta_{2}}\ldots \times_{\tta_{p-1}}A^{(p)}$, with 
\[ 
A_{i_{1}j_{1}i_{2}j_{2}\ldots i_{p}j_{p}} = A^{(1)}_{i_{a_{1}}i_{b_{1}}} A{(2)}_{i_{a_{2}}i_{b_{2}}}\ldots A^{(p)}_{i_{a_{p}}i_{b_{p}}}
\]
For $p=2$, we let $\tta_{1}:=\set{s,t}\subset [4]$ with $s<t$ and $\tta_{2}:=\set{p,q}$ the complement of $\tta_{1}$ with $p<q$. The outer product $A\times_{\tta_{1}} B$ 
is the 4-order tensor with the $(s,t)$-modes attributed to $A$ and $(p,q)$-modes attributed to $B$. Thus 
\[ 
(A\times_{(1,2)} B)_{i_{1}i_{2}i_{3}i_{4}} =A_{i_{1}i_{2}}B_{i_{3}i_{4}},\   (A\times_{(1,3)} B)_{i_{1}i_{2}i_{3}i_{4}} =A_{i_{1}i_{3}}B_{i_{2}i_{4}}.
\] 
For  $\set{(s,t)}=\set{(1, 2)}$, we simply write $A\times B$ instead of $A\times_{(1,2)} B$. For a tensor $\A\in \R^{m\times n\times p\times q}$ and a matrix 
$B\in \R^{p\times q}$, the product $C=\A B$ refers to the contractive product, which is defined as 
\[ C_{ij} =\sum\limits_{i^{\p}, j^{\p}} A_{iji^{\p}j^{\p}}B_{i^{\p}j^{\p}}. \]
Note that the product $\A\times_{4} B$ is still a 4-order tensor since 
\[ (\A\times_{4} B)_{i_{1}i_{2}i_{3}i_{4}} = \sum\limits_{k} A_{i_{1}i_{2}i_{3}k} B_{k i_{4}}. \]
We denote $A^{[2]}:=A\times A$ and $A^{(2)}=A\times_{(2,4)} A$ for a matrix $A\in \R^{m\times n}$, i.e.,    
\[ A^{[2]}_{i_{1}i_{2}i_{3}i_{4}} = A_{i_{1}i_{2}}A_{i_{3}i_{4}}, \quad  A^{(2)}_{i_{1}i_{2}i_{3}i_{4}} = A_{i_{1}i_{3}}A_{i_{2}i_{4}}. \]
It is obvious that $A^{[2]}$ has size $m\times n\times m\times n$ while $A^{(2)}$ has size $m\times m\times n\times n$.  \\

\begin{prop}\label{prop:assoc4tensorprod}
Let $A,B,C,D$ be any matrices of appropriate sizes and $\set{s,t}\subset [4]$ and $\set{p,q}=\set{s,t}^{c}$.  Then 
\begin{description}
  \item[(1) ]  $(A\times_{(s,t)} B)\times_{(s,t)} C= (B,C)A$ where $B,C$ are of same size and $(X, Y)$ stands for the inner product of $X,Y$.
  \item[(2) ]  $(A\times_{(s,t)} B)\times_{(p,q)} C=  A\times_{(s,t)} (B\times_{(p,q)} C)$.
  \item[(3) ]  $(A\times B)\times_{4} C = A\times (BC)$.   
  \item[(4) ]  $(A\times_{(s,t)} B)(C\times_{(s,t)} D) = (AC) \times_{(s,t)} (B\times_{(s,t)} D)$.
\end{description}   
\end{prop}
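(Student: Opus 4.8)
The plan is to prove each of (1)--(4) by the same device: fix a generic index tuple $(i_1,i_2,i_3,i_4)$, expand both sides entry-by-entry directly from the definitions of the tensor product $\times_{(s,t)}$ of two matrices, of the tensor--matrix contraction $\times_{(s,t)}$, of the single-mode product $\times_4$, and of ordinary matrix multiplication, and verify that the two resulting scalars coincide. To keep the indices straight I write, for a tuple $(i_1,i_2,i_3,i_4)$, $\al=(i_p,i_q)$ for the ``first-factor'' index pair and $\be=(i_s,i_t)$ for the ``second-factor'' pair, so that by definition $(A\times_{(s,t)}B)_{i_1i_2i_3i_4}=A_{\al}B_{\be}$, and I fix once and for all (consistently with the displayed rule $C_{ij}=\sum_{i^{\p},j^{\p}}A_{iji^{\p}j^{\p}}B_{i^{\p}j^{\p}}$ for $\A\times_{(3,4)}B$) the placement convention: in $X\times_{(s,t)}Y$ the factor $X$ is associated with the slots $(p,q)$ and $Y$ with the slots $(s,t)$; a matrix factor is summed against the corresponding slots of the other (tensor) factor, and the slots not so consumed remain free.

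Parts (1) and (3) are then immediate one-line checks. For (1): $(A\times_{(s,t)}B)_{i_1i_2i_3i_4}=A_{\al}B_{\be}$, and contracting against $C$ on the $(s,t)$-slots pulls $A_{\al}$ out of the sum and leaves $\sum_{\be}B_{\be}C_{\be}=(B,C)$, i.e. the $\al$-entry of $(B,C)A$. For (3): $(A\times B)_{i_1i_2i_3i_4}=A_{i_1i_2}B_{i_3i_4}$, so applying $\times_4 C$ and using $(\A\times_4 B)_{i_1i_2i_3i_4}=\sum_k A_{i_1i_2i_3k}B_{ki_4}$ gives $\sum_k A_{i_1i_2}B_{i_3k}C_{ki_4}=A_{i_1i_2}(BC)_{i_3i_4}$, which is precisely the $(i_1,\ldots,i_4)$-entry of $A\times(BC)$.

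For (2) the computation is still one line on each side, but the placement convention must be handled with a little care. On the left, $A\times_{(s,t)}B$ has entry $A_{\al}B_{\be}$, and contracting against $C$ on the $(p,q)$-slots leaves the matrix whose $\be$-entry is $B_{\be}\sum_{\al}A_{\al}C_{\al}=(A,C)B_{\be}$. On the right, $B\times_{(p,q)}C$ is, by the definition of $\times_{(p,q)}$ for two matrices, the $4$-tensor that places $C$ on the $(p,q)$-slots and $B$ on the $(s,t)$-slots, i.e. it has entry $B_{\be}C_{\al}$; then $A\times_{(s,t)}(\cdot)$, with $A$ in the first-factor role (hence associated with the $(p,q)$-slots), contracts against $A$ on exactly those slots and again leaves $B_{\be}\sum_{\al}A_{\al}C_{\al}=(A,C)B_{\be}$. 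So the two sides agree. The only thing to watch is that $\set{p,q}$ and $\set{s,t}$ may interleave (e.g. $(s,t)=(1,3)$, $(p,q)=(2,4)$), so ``the $(p,q)$-slots'' need not be contiguous, and the placement convention has to be applied in exactly the same way on both sides.

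Part (4) is the mixed-product (Kronecker-type) identity. The cleanest route is to flatten: identify a $4$-tensor of the form $A\times_{(s,t)}B$ with the matrix obtained by using $(i_p,i_s)$ as row multi-index and $(i_q,i_t)$ as column multi-index; under this identification $A\times_{(s,t)}B\leftrightarrow A\otimes B$ and $C\times_{(s,t)}D\leftrightarrow C\otimes D$, the product of the two $4$-tensors becomes the ordinary matrix product of the flattenings, hence equals $(A\otimes B)(C\otimes D)=(AC)\otimes(BD)$, which flattens back to $(AC)\times_{(s,t)}(BD)$ --- the right-hand side of (4). (Equivalently one expands entrywise: in $A_{\al}B_{\be}$ against $C_{\al^{\p}}D_{\be^{\p}}$ the $(p,q)$-indices contract to give $AC$ and the $(s,t)$-indices to give $BD$.) The one preliminary that genuinely needs attention here is to settle exactly what the bare product of two $4$-tensors in (4) denotes --- namely this flattened matrix product --- since another convention would alter the statement. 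Beyond that, all four parts are routine substitutions: there is no conceptual obstacle, and the only real work is the bookkeeping of which factor sits on which pair of slots and which slots are summed.
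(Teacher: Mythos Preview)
Your proposal is correct and follows essentially the same route as the paper's own proof: for each item, fix a generic index tuple and expand both sides entry-by-entry from the definitions. The paper does (1) explicitly for $(s,t)=(3,4)$ and simply asserts that (2) and (3) follow ``by the same technique''; your $\al=(i_p,i_q),\ \be=(i_s,i_t)$ notation handles all choices of $(s,t)$ uniformly and is in fact cleaner than working one case at a time.

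The only genuine difference is in (4): the paper chooses the specific case $(s,t)=(2,4)$ and verifies the identity by a direct entrywise computation, expanding $(A_1B_1)_{i_1i_3}(A_2B_2)_{i_2i_4}$ as a double sum and recognising it as $\sum_{j,k}(A_1\times A_2)_{i_1i_2jk}(B_1\times B_2)_{jki_3i_4}$. Your primary argument instead flattens the $4$-tensor $A\times_{(s,t)}B$ to the Kronecker product $A\otimes B$ (with row multi-index $(i_p,i_s)$ and column multi-index $(i_q,i_t)$) and then invokes the mixed-product rule $(A\otimes B)(C\otimes D)=(AC)\otimes(BD)$. This is a slightly more conceptual packaging of exactly the same computation --- and you also sketch the direct entrywise version as an alternative --- so the two proofs are equivalent in substance. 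Your care in specifying which convention governs the bare product of two $4$-tensors is well placed, since the paper's statement (and its right-hand side $(AC)\times_{(s,t)}(B\times_{(s,t)}D)$, which its own proof treats as $(AC)\times_{(s,t)}(BD)$) leaves this somewhat implicit.
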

\begin{proof}
(1). We may assume that $A\in \R^{m\times n}, B,C\in \R^{p\times q}$ (the equation is valid only if $B,C$ are of the same size). For simplicity, we let 
$\set{s,t}=\set{1,2}$ and denote $D=(A\times_{(1,2)} B)\times_{(1,2)} C$. Then for any pair $(i,j)$, we have by definition 
\beyy\label{eq: prfprop01}
D_{ij} &=& \sum\limits_{\ip,\jp} (A\times_{(1, 2)} B)_{ij\ip \jp} C_{\ip\jp} \\
         &=& \sum\limits_{\ip,\jp} A_{ij}B_{\ip \jp} C_{\ip\jp} =(B,C)A_{ij}   
\eeyy
which immediately implies (1) for $(s,t)=(1,2)$. Similarly we can also show its validity for other cases.  The second and the third item can also be checked using the same 
technique. To show the last item, we consider the case $(s,t)=(1,3)$ and rewrite it in form  
\beq\label{eq: prop104}
(A_{1}\times A_{2})(B_{1}\times B_{2}) = (A_{1}B_{1})\times (A_{2}B_{2})
\eeq    
where $\times :=\times_{(1,3)}, A_{i}\in \R^{m_{i}\times n_{i}}, B_{i}\in \R^{n_{i}\times p_{i}}$ for $i=1,2$. Denote the tensor of the left hand side and the right hand side 
resp. by $L$ and $R$. Then we have 
\beyy\label{eq: prfprop04}
R_{i_{1}i_{2}i_{3}i_{4}} &=& (A_{1}B_{1})_{i_{1}i_{3}} (A_{2}B_{2})_{i_{2}i_{4}}\\
                                     &=& (\sum\limits_{j=1}^{n_{1}} a_{i_{1}j}^{(1)} b_{j i_{3}}^{(1)})(\sum\limits_{k=1}^{n_{2}} a_{i_{2}k}^{(2)} b_{k i_{4}}^{(2)})\\
                                     &=& \sum\limits_{j,k} a_{i_{1}j}^{(1)} a_{i_{2}k}^{(2)} b_{j i_{3}}^{(1)}b_{k i_{4}}^{(2)}\\
                                     &=& \sum\limits_{j,k} (A_{1}\times A_{2})_{i_{1}i_{2}jk} (B_{1}\times B_{2})_{jki_{3}i_{4}}= L_{i_{1}i_{2}i_{3}i_{4}} 
\eeyy
for all possible $i_{1},i_{2},i_{3},i_{4}$. Thus (\ref{eq: prop104}) holds. This argument can also be extended to other cases.  
 \end{proof} 
 
\indent The outer product of matrices can be extended to the case for any number of  of any order.  Let $\A_1, \A_2, \cdots, \A_s$ be tensors of orders $p_1, p_2, \ldots, p_s$ 
respectively and let $S_{1}, S_2, \cdots, S_s$ be a partition of set $[p], p=p_1+p_2+\cdots + p_k$ where  $\abs{S_j}= p_j, \forall  j\in [s]$.  Denote this partition by 
$\gamma:=\set{S_{1}, S_2, \cdots, S_s}$ for our convenience. The outer product of  $\A_1, \A_2, \cdots, \A_s$ under partition $\gamma$ as 
\beq\label{eq: outprodmultiple} 
(\A_1\times \A_2\times \cdots \times \A_s)_{i_1i_2\ldots i_p} = (\A_1)_{i_{t_{11}}\cdots i_{t_{1p_1}}} (\A_2)_{i_{t_{21}}\cdots i_{t_{2p_2}}}\cdots 
(\A_s)_{i_{t_{s1}} \cdots i_{t_{s p_s}}}    
\eeq
outer productwhere $S_j :=\set{t_{j1}, t_{j2}, \cdots ,t_{j p_j}}$ with entries in increasing order.  Here we allow some $p_j$s to be zero, i.e., $\A_j$ is a scalar, which corresponds to an empty 
underlying set $S _j$.  We denote this outer product of $\A_1, \A_2, \cdots, \A_s$ under (index) partition $\gamma$ by $[\A_1,\A_2,\cdots, \A_s]_{\gamma}$. Note that the 
assumption of the increasing order of the index can be dropped when $\A_j$ is symmetric.  We note that when two tensors $\A_k,\A_l$ agree, the order of the sets corresponding to 
these two tensors in the partition has no influence on the product.  In particular, the outer power of a tensor $\A$ along an unordered partition $\gamma=\set{S_{1}, S_2, \cdots, S_k}$
is defined as     
\beq\label{eq: outprod4unorder}
\A^{\gamma} := \overbrace{[\A, \A, \ldots, \A]}^{k} .
\eeq
\indent  We now define the transposition of tensors. Given a tensor $\A$ of order $p$ and a permutation $\pi$ on set $[p]$. We define $ (\trans_{\pi} \A)_{\mb{i}} :=(\A)_{\mb{i} \circ \pi}$,
or more specifically
\beq\label{eq: deftransport}
  (\trans_{\pi} \A)_{i_1 i_2 \cdots i_p } := A_{i_{\pi(1)}i_{\pi(2)}\cdots i_{\pi(p)}} 
\eeq 
Note that this is a left action on the symmetric group of the index set, i.e.,  $\trans_{\pi\circ \rho} \A =\trans_{\pi}\trans_{\rho} \A$.  Obviously that a tensor is symmetric if it is 
preserved by all transpositions. \\
\indent  The outer product of tensors along a partition can be expressed in terms of the transposition of a canonical outer product, i.e., 
\beq\label{eq: transcanonouterprod}
 \A_1\times_{S_1} \A_2 \times_{S_2}\cdots \times_{S_{k-1}} \A_k = \trans_{\pi} \A_1\times \A_2 \times \cdots \times \A_k,  
\eeq
where $\pi$ is a permutation of $[p]$ mapping the block 
\[ \set{ \sum\limits_{k=1}^{j-1} p_k+1,  \sum\limits_{k=1}^{j-1} p_k +2, \cdots, \sum\limits_{k=1}^{j} p_k },   \]
increasingly onto $S_j$ for all $j\in [k]$.  The requirement of the increasing order on the blocks can be removed if all the $\A_j$'s are symmetric. \\
\indent  The introduction of transposition allows us to formulate the beautiful fact that the outer products of symmetric tensors $\A_{1}, \A_2, \cdots, \A_k$
along all suitable partitions exhaust all transposes of the canonical outer product $\A_{1}\times \A_2\times\cdots \times\A_k$. For the simple case when all $\A_j$'s 
are the same, say $\A:=\A_{1}=\A_2 =\cdots = \A_k$ with order $m$ ($p=mk$), we denote $\A^k:=\overbrace{[\A,\A,\cdots, \A]}^{k}$ for canonical $k$-power of $\A$. \\
\indent  Now let us go back to the case when we are given two matrices, say $A\in \R^{m_1\times n_1},B\in \R^{m_2\times n_2}$. The cross (outer)  product of $A$ and $B$, 
written as $A\times_c B$, is the 4-order tensor with $(A\times_{c} B)_{i_1i_2i_3i_4} = A_{i_1i_3}B_{i_2i_4}$, that is,  $A\times_c B=[A,B]_{\ga}$ with $\ga=\set{\set{1,3},\set{2,4}}$ as a partition of $[4]$. Note that the canonical outer product $A\times B=[A,B]_{\eta}$ where $\eta:=\set{\set{1,2}, \set{3,4}}$. On the other 
hand, the contractive product of a 4-order tensor $\A\in \R^{m\times n\times m\times n}$ with a matrix $P\in \R^{m\times n}$ is defined as  
\[ (\A P)_{ij} = \sum\limits_{\ip,\jp} A_{ij\ip\jp} P_{\ip\jp}, \quad   (P\A)_{ij}= \sum\limits_{\ip,\jp} P_{\ip\jp} A_{\ip\jp i j} \]
The following results can be verified easily (thus proof omitted). 
\begin{cor}\label{cor: tensorprod01} 
\begin{description}
  \item[(1) ]  $(A\times_{(s,t)} I_{n})\times_{(s,t)} I_{n}=n A$ for any 2-subset $\set{s,t}$ of $[4]$.
  \item[(2) ]  $(I_{m}\times I_{n}) A = \tr(A) I_{m}$ for any $A\in \R^{n\times n}$.
  \item[(3) ]  $A\times (I_{m}\times_{c} I_{n}) = (I_{m}\times_{c} I_{n}) A = A$ for any $A\in \R^{m\times n}$.
  \item[(4) ]  $A^{\top}(I_{n}\times_{(2,3)} I_{m}) =A,  (I_{m}\times_{(2,3)} I_{n}) A^{\top} = A$ for any $A\in \R^{m\times n}$.
\end{description}   
\end{cor}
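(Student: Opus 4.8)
The plan is to dispatch the four identities one at a time: (1) and (2) should fall out directly from Proposition~\ref{prop:assoc4tensorprod}(1), while (3) and (4) I would settle by a short entrywise computation using the contraction formulas $(\A P)_{ij}=\sum_{\ip,\jp}A_{ij\ip\jp}P_{\ip\jp}$ and $(P\A)_{ij}=\sum_{\ip,\jp}P_{\ip\jp}A_{\ip\jp ij}$ recalled just before the statement.

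For (1), I would apply Proposition~\ref{prop:assoc4tensorprod}(1) with $B=C=I_{n}$ (these trivially share the common size that proposition requires, and $A$ may be any matrix): it yields $(A\times_{(s,t)}I_{n})\times_{(s,t)}I_{n}=(I_{n},I_{n})A$, and since the inner product $(I_{n},I_{n})$ equals $n$, the value $nA$ drops out. For (2), I would rewrite $I_{m}\times I_{n}=I_{m}\times_{(3,4)}I_{n}$ and view $(I_{m}\times I_{n})A$ as the mode-$(3,4)$ contraction $(I_{m}\times_{(3,4)}I_{n})\times_{(3,4)}A$; since $I_{n}$ and $A$ are both $n\times n$, Proposition~\ref{prop:assoc4tensorprod}(1) applies once more and returns $(I_{n},A)\,I_{m}=\tr(A)\,I_{m}$.

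For (3) and (4) I would work at the level of entries. Written in coordinates, each of the $4$-tensors $I_{m}\times_{c}I_{n}$, $I_{n}\times_{(2,3)}I_{m}$ and $I_{m}\times_{(2,3)}I_{n}$ is a product of two Kronecker deltas, so substituting them into the two contraction formulas collapses every double sum. In (3) the deltas of $I_{m}\times_{c}I_{n}$ force the summation indices to $(\ip,\jp)=(i,j)$, so both $A\times(I_{m}\times_{c}I_{n})$ and $(I_{m}\times_{c}I_{n})A$ reduce to $A_{ij}$; in (4) the deltas of $I_{n}\times_{(2,3)}I_{m}$ (respectively $I_{m}\times_{(2,3)}I_{n}$) force $(\ip,\jp)=(j,i)$, so that the transposition carried by $A^{\top}$ is undone and one again recovers $A_{ij}$.

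I expect no conceptual difficulty here; the only step that needs care --- and the single place a slip is easy --- is the index bookkeeping: keeping straight which contraction convention ($\A P$ versus $P\A$) is in force in each of the products, and which index slot of each identity factor ranges over $[m]$ rather than over $[n]$. Once that is tracked correctly, the corollary is just a routine record of how the three ``identity-type'' $4$-tensors $I_{m}\times I_{n}$, $I_{m}\times_{c}I_{n}$ and $I_{m}\times_{(2,3)}I_{n}$ act on matrices under the tensor products introduced above.
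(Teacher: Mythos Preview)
Your proposal is correct and matches the paper's intent: the paper states Corollary~\ref{cor: tensorprod01} without an explicit proof, treating it as an immediate consequence of Proposition~\ref{prop:assoc4tensorprod} together with the contraction conventions introduced just before it. Your derivation of (1)--(2) from Proposition~\ref{prop:assoc4tensorprod}(1) and your entrywise verification of (3)--(4) via the Kronecker-delta structure of $I_{m}\times_{c}I_{n}$ and $I_{m}\times_{(2,3)}I_{n}$ are exactly the routine checks the paper leaves to the reader.
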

Note also that tensor $I_{m}\times_{c} I_{n}$ can be regarded as the \emph{identity tensor} in the space $\R^{m\times n\times m\times n}$ due to (3) of  Corollary \ref{cor: tensorprod01}. 

\indent  Recall that a commutation matrix $K_{p,q}=(B_{ij})(i\in [p], j\in [q])$ is an $p\times q$ block matrix where each block $B_{ij}\in \R^{q\times p}$ has a unique nonzero 
entry 1 at position $(j,i)$. Thus $K_{2,3}$ is an $6\times 6$ matrix  
\[ 
\left(\begin{array}{cccccc}1 & 0 & 0 & 0 & 0 & 0 \\0 & 0 & 1 & 0 & 0 & 0 \\0 & 0 & 0 & 0 & 1 & 0 \\0 & 1 & 0 & 0 & 0 & 0 \\0 & 0 & 0 & 1 & 0 & 0 \\0 & 0 & 0 & 0 & 0 & 0\end{array}\right)
\]  
In \cite{xh2019},we define the commutation tensor $\K_{n,m}=(K_{ijkl})$ as an $n\times m\times m\times n$ tensor that transforms any matrix $A\in \R^{m\times n}$ into 
its transpose, i.e., $\K_{n,m} A =A^{\top}$.  We also show that $\K_{m,n} =I_{m}\times_{(2,3)} I_{n}$. \\
\indent  Let $X=(x_{ij})\in \R^{m\times n}$ be a random matrix whose entries are independent variables. Let $Y=(y_{ij})\in \R^{p\times q}$ be a matrix each of whose entries 
$y_{ij}$ is a function of $X$.  The derivative $\frac{dY}{dX}$ is interpreted as the 4-order tensor $A=(A_{i_{1}i_{2}i_{3}i_{4}})$ of size $m\times n\times p\times q$ whose entries are defined by 
 \[  A_{i_{1}i_{2}i_{3}i_{4}} = \frac{d Y_{i_{3}i_{4}}}{d X_{i_{1}i_{2}}}.  \]
 
\indent In order to simplify the expressions of high order moments of random matrices, we now use tensors to describe the derivatives of matrices. In the following, we will 
present some known results in tensor forms other than in the conventional matrix versions. The following lemma is the derivative chain rule in the matrix version. \\   
  
\begin{lem}\label{le: matrxcompderiv}
Let $X\in \R^{m_{1}\times n_{1}}, Y\in \R^{m_{2}\times n_{2}}, Z\in \R^{m_{3}\times n_{3}}$, and $Z=Z(Y), Y=Y(X)$.  Then we have  
\beq\label{eq:matrxcompder} 
\frac{dZ}{dX} = \frac{dY}{dX}\times \frac{dZ}{dY}   
\eeq   
\end{lem}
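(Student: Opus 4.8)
The plan is to prove the identity entrywise, reducing it to the ordinary (scalar) multivariable chain rule. First I would make the hypotheses explicit: the entries $X_{i_1i_2}$ ($i_1\in[m_1]$, $i_2\in[n_1]$) are treated as independent scalar variables, each entry $Y_{j_1j_2}=Y_{j_1j_2}(X)$ is a scalar function of these $m_1 n_1$ variables, each entry $Z_{i_3i_4}=Z_{i_3i_4}(Y)$ is a scalar function of the $m_2 n_2$ variables $Y_{j_1j_2}$, and all are assumed sufficiently differentiable so that the composite $Z_{i_3i_4}=Z_{i_3i_4}(Y(X))$ is differentiable in $X$ (here $\frac{d}{dX_{i_1i_2}}$ is understood as the partial derivative with respect to the independent variable $X_{i_1i_2}$). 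Next I would pin down the meaning of the product $\times$ on the right-hand side: since $\frac{dY}{dX}$ is a $4$-tensor of size $m_1\times n_1\times m_2\times n_2$ and $\frac{dZ}{dY}$ is a $4$-tensor of size $m_2\times n_2\times m_3\times n_3$, the symbol $\times$ is here the natural extension of the tensor-times-matrix product $\times_{(3,4)}$, namely contraction of the last two slots of the left factor with the first two slots of the right factor (consistent with the recursive products $A^{[k+1]}=A^{[k]}\times_{(2k+1,2k+2)}A$ introduced above), so that
\[
\Bigl(\frac{dY}{dX}\times\frac{dZ}{dY}\Bigr)_{i_1i_2i_3i_4}=\sum_{j_1,j_2}\Bigl(\frac{dY}{dX}\Bigr)_{i_1i_2j_1j_2}\Bigl(\frac{dZ}{dY}\Bigr)_{j_1j_2i_3i_4},
\]
which has size $m_1\times n_1\times m_3\times n_3$, matching $\frac{dZ}{dX}$.

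With these conventions the computation is immediate. By the definition of the derivative tensors, the $(i_1i_2i_3i_4)$ entry of the right-hand side equals $\sum_{j_1,j_2}\frac{\partial Y_{j_1j_2}}{\partial X_{i_1i_2}}\,\frac{\partial Z_{i_3i_4}}{\partial Y_{j_1j_2}}$, while the corresponding entry of the left-hand side is by definition $\frac{\partial Z_{i_3i_4}}{\partial X_{i_1i_2}}$. Applying the classical chain rule to the scalar map $X\mapsto Z_{i_3i_4}(Y(X))$, differentiated with respect to the single variable $X_{i_1i_2}$, yields precisely $\frac{\partial Z_{i_3i_4}}{\partial X_{i_1i_2}}=\sum_{j_1,j_2}\frac{\partial Z_{i_3i_4}}{\partial Y_{j_1j_2}}\,\frac{\partial Y_{j_1j_2}}{\partial X_{i_1i_2}}$. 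Hence the two sides agree for every $i_1,i_2,i_3,i_4$, which proves (\ref{eq:matrxcompder}).

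The routine parts are the index bookkeeping and the dimension check, and the only analytic ingredient is the scalar chain rule applied coordinatewise. The point that needs the most care — and which I expect to be the main (mild) obstacle — is making the contraction convention for a product of two $4$-tensors unambiguous and consistent with the conventions fixed earlier in this section (the tensor-times-matrix product $\A\times_{(3,4)}B$ and the recursive products $A^{[k+1]}$, $A^{(k+1)}$); once that is settled, the lemma is essentially a restatement of the ordinary chain rule. As an alternative one could vectorize $X$, $Y$, $Z$ and invoke the Jacobian chain rule for the induced vector-valued maps, but the entrywise argument keeps the tensor indexing transparent and is shorter.
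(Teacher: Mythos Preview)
Your proposal is correct and follows essentially the same approach as the paper: both verify the identity entrywise by writing out the $(i_1i_2i_3i_4)$ entry of each side and invoking the scalar chain rule for $\frac{\partial Z_{i_3i_4}}{\partial X_{i_1i_2}}$. Your version is more explicit about the contraction convention for the product of two $4$-tensors and about the differentiability hypotheses, but the underlying argument is identical.
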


\begin{proof}
We denote $\A=\frac{dY}{dX}, \B=\frac{dZ}{dY}$ and $\C = \frac{dZ}{dX}$.  By definition we have 
\[ \A\in \R^{m_{1}\times n_{1}\times m_{2}\times n_{2}}, \B\in \R^{m_{2}\times n_{2}\times m_{3}\times n_{3}}, \C\in \R^{m_{1}\times n_{1}\times m_{3}\times n_{3}}. \]
Then for any given $(i_{1},i_{2},i_{3},i_{4})\in [m_{1}]\times [n_{1}]\times [m_{3}]\times [n_{3}]$, we have 
\beyy
A_{i_{1}i_{2}i_{3}i_{4}}   &=& (\frac{dZ}{dX})_{i_{1}i_{2}i_{3}i_{4}} = \frac{dz_{i_{3}i_{4}}}{dx_{i_{1}i_{2}}} \\
                                       &=&\sum\limits_{j_{1},j_{2}} \frac{dy_{j_{1}j_{2}}}{dx_{i_{1}i_{2}}}\frac{dz_{i_{3}i_{4}}}{dy_{j_{1}j_{2}}}\\
                                       &=& (\frac{dY}{dX}\times \frac{dZ}{dY})_{i_{1}i_{2}i_{3}i_{4}}
\eeyy
Thus (\ref{eq:matrxcompder}) holds.  
\end{proof}
\indent Lemma \ref{le: matrxcompderiv} can be extended to a more general case:

\begin{lem}\label{le: matrxderivchain23}
\begin{description}
  \item[(1) ]  Let $Z=Z(Y_{1},Y_{2},\ldots, Y_{n})$ be the matrix-valued function of $Y_{1},Y_{2},\ldots, Y_{n}$ where $Y_{k}=Y_{k}(X)$ for all $k\in [n]$. Then
\beq\label{eq:matrxcompder2} 
\frac{dZ}{dX} = \sum\limits_{k=1}^{n} \frac{dY_{k}}{dX}\times \frac{dZ}{dY_{k}}   
\eeq  
  \item[(2) ]  Let $X,Y,Z,U$ be matrix forms of variables and $U=U(Z), Z=Z(Y), Y=Y(X)$. Then we have the chain  
\beq\label{matrxderivchain3}
dU = dX\times (dY/dX)\times (dZ/dY)\times (dU/dZ)  
\eeq
where $dX=(dx_{ij})$.   
\end{description}
\end{lem}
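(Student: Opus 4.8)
The plan is to reduce both parts to the entrywise (scalar) multivariable chain rule, together with the mode-$(3,4)$ contraction identity already established in Lemma~\ref{le: matrxcompderiv}, and then to handle the index bookkeeping using the associativity of the $4$-tensor product from Proposition~\ref{prop:assoc4tensorprod}. No analytic subtlety should arise beyond the classical chain rule, since all index sets are finite.

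For part (1), I would fix an index quadruple $(i_1,i_2,i_3,i_4)$ and compute $\left(\frac{dZ}{dX}\right)_{i_1i_2i_3i_4} = \frac{dz_{i_3i_4}}{dx_{i_1i_2}}$. Since $z_{i_3i_4}$ depends on $X$ only through the entries of $Y_1,\dots,Y_n$, the ordinary chain rule for a scalar function of finitely many intermediate scalar variables gives
\[
\frac{dz_{i_3i_4}}{dx_{i_1i_2}} = \sum_{k=1}^{n}\ \sum_{j_1,j_2} \frac{d(Y_k)_{j_1j_2}}{dx_{i_1i_2}}\ \frac{dz_{i_3i_4}}{d(Y_k)_{j_1j_2}},
\]
and the interchange of the two finite sums is harmless. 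For each fixed $k$, the inner sum over $(j_1,j_2)$ is exactly the $(i_1,i_2,i_3,i_4)$ entry of $\frac{dY_k}{dX}\times\frac{dZ}{dY_k}$ by the definition of the mode-$(3,4)$ product (this is the computation already carried out in the proof of Lemma~\ref{le: matrxcompderiv}). Summing over $k$ then yields \eqref{eq:matrxcompder2}.

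For part (2), I would iterate Lemma~\ref{le: matrxcompderiv} twice: applying it to $U=U(Z(Y))$ gives $\frac{dU}{dY}=\frac{dZ}{dY}\times\frac{dU}{dZ}$, and applying it to $U$ as a function of $Y$ with $Y=Y(X)$ gives $\frac{dU}{dX}=\frac{dY}{dX}\times\frac{dU}{dY}=\frac{dY}{dX}\times\left(\frac{dZ}{dY}\times\frac{dU}{dZ}\right)$. I would then contract on the left with the differential matrix $dX=(dx_{ij})$, using that $(dX\times\A)_{ij}=\sum_{i',j'} dx_{i'j'}A_{i'j'ij}$ together with the total-differential identity $du_{ij}=\sum_{i',j'}\frac{du_{ij}}{dx_{i'j'}}dx_{i'j'}$, i.e.\ $dU=dX\times\frac{dU}{dX}$. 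Substituting and re-associating the products gives \eqref{matrxderivchain3}; one can equally well read the right-hand side as the telescoping chain $dX\times\frac{dY}{dX}=dY$, then $dY\times\frac{dZ}{dY}=dZ$, then $dZ\times\frac{dU}{dZ}=dU$.

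The one point needing care — the \emph{hard} part, such as it is — is the associativity and mode-bookkeeping: one must check that the three contractions in $dX\times\frac{dY}{dX}\times\frac{dZ}{dY}\times\frac{dU}{dZ}$ may be grouped in any order and that each is taken along the intended pair of modes. This is the iterated version of Proposition~\ref{prop:assoc4tensorprod}(2)--(3), which, as remarked immediately after that proposition, extends to products of tensors by the same entrywise verification; I would spell out just enough of that verification to justify the regrouping used above.
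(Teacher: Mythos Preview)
Your proposal is correct; the paper itself omits the proof entirely, remarking only that the lemma is easy to verify. The entrywise scalar chain rule for part (1) and the iteration of Lemma~\ref{le: matrxcompderiv} (together with the total-differential identity $dU = dX\times \frac{dU}{dX}$ and the telescoping reading) for part (2) are precisely the straightforward verifications the paper has in mind, so there is nothing to compare.
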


\indent It is easy to verify the results in Lemma \ref{le: matrxderivchain23} so that we omit its proof here. The following results on the matrix derivatives 
are useful and will be used in the next section. 
  
 \begin{thm}\label{th: matrixderivat01}
 Let  $X=(X_{ij})\in \R^{m\times n}$ be a matrix whose elements are independent variables. Then 
 \begin{description}
  \item[(1) ]  $\frac{dX}{dX} = I_{m}\times_{c} I_{n}$.
  \item[(2) ]  $\frac{dX^{\top}}{dX} = I_{m}\times_{(2,3)} I_{n}=\K_{m,n}$.
  \item[(3) ]  $\frac{d(YZ)}{dX} =\frac{dY}{dX}\times_{4} Z + \frac{dZ}{dX}\times_{3} Y^{\top}$. 
  \item[(4) ]  $\frac{dX^{2}}{dX} = I_{n}\times_{c} X  + X^{\top}\times_{c} I_{n}$ when $m=n$.   
  \item[(5) ]  $\frac{dX^{k}}{dX} = \sum\limits_{p=0}^{k-1} \left[(X^{\top})^{p} \times_{c} X^{k-1-p}\right] $ where $X\in \R^{n\times n}$. 
\end{description}   
 \end{thm}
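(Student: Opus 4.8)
The plan is to verify each of the five identities directly from the definition of the matrix derivative $\frac{dY}{dX}$ as the $4$-order tensor $A$ with $A_{i_1i_2i_3i_4}=\frac{dY_{i_3i_4}}{dX_{i_1i_2}}$, using the fact that the entries $X_{ij}$ are independent variables, so $\frac{dX_{kl}}{dX_{ij}}=\delta_{ik}\delta_{jl}$. For item (1), writing out $\left(\frac{dX}{dX}\right)_{i_1i_2i_3i_4}=\frac{dX_{i_3i_4}}{dX_{i_1i_2}}=\delta_{i_1i_3}\delta_{i_2i_4}$, which is exactly the entry $(I_m)_{i_1i_3}(I_n)_{i_2i_4}=(I_m\times_c I_n)_{i_1i_2i_3i_4}$; here one must be careful that $\times_c$ denotes $\times_{(2,4)}$ (or whichever convention matches Corollary \ref{cor: tensorprod01}(3)), so that the index pattern $(i_1i_3)(i_2i_4)$ is correct. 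For item (2), $\left(\frac{dX^\top}{dX}\right)_{i_1i_2i_3i_4}=\frac{d(X^\top)_{i_3i_4}}{dX_{i_1i_2}}=\frac{dX_{i_4i_3}}{dX_{i_1i_2}}=\delta_{i_1i_4}\delta_{i_2i_3}=(I_m)_{i_1i_4}(I_n)_{i_2i_3}=(I_m\times_{(2,3)}I_n)_{i_1i_2i_3i_4}$, and then invoke the already-stated identification $\K_{m,n}=I_m\times_{(2,3)}I_n$.

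For item (3), I would apply the two-variable chain rule of Lemma \ref{le: matrxderivchain23}(1) with the product map $(Y,Z)\mapsto YZ$: since $\frac{\partial (YZ)_{i_3i_4}}{\partial Y_{j_1j_2}}=\delta_{i_3j_1}Z_{j_2i_4}$ and $\frac{\partial (YZ)_{i_3i_4}}{\partial Z_{k_1k_2}}=Y_{i_3k_1}\delta_{i_4k_2}$, summing the contributions $\frac{dY_{j_1j_2}}{dX_{i_1i_2}}\cdot\delta_{i_3j_1}Z_{j_2i_4}$ and $\frac{dZ_{k_1k_2}}{dX_{i_1i_2}}\cdot Y_{i_3k_1}\delta_{i_4k_2}$ over the intermediate indices reproduces exactly $\left(\frac{dY}{dX}\times_4 Z\right)_{i_1i_2i_3i_4}+\left(\frac{dZ}{dX}\times_3 Y^\top\right)_{i_1i_2i_3i_4}$, once one checks that $\times_4$ contracts the fourth slot of $\frac{dY}{dX}$ against the first row-index of $Z$ and $\times_3$ contracts the third slot against the first index of $Y^\top$. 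The main thing to get right is matching the slot-contraction conventions of $\times_3,\times_4$ from Section 2 to the index bookkeeping; this is bookkeeping, not real difficulty.

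Item (4) is the special case $Y=Z=X$ of item (3): $\frac{dX^2}{dX}=\frac{dX}{dX}\times_4 X+\frac{dX}{dX}\times_3 X^\top$, and substituting $\frac{dX}{dX}=I_n\times_c I_n$ from item (1) (now $m=n$) gives $(I_n\times_c I_n)\times_4 X=I_n\times_c X$ and $(I_n\times_c I_n)\times_3 X^\top=X^\top\times_c I_n$ after a short computation with the identity tensor. Item (5) then follows by induction on $k$: write $X^k=X^{k-1}\cdot X$, apply item (3) to get $\frac{dX^k}{dX}=\frac{dX^{k-1}}{dX}\times_4 X+\frac{dX}{dX}\times_3(X^{k-1})^\top$, use the inductive hypothesis $\frac{dX^{k-1}}{dX}=\sum_{p=0}^{k-2}(X^\top)^p\times_c X^{k-2-p}$, and check that $\left[(X^\top)^p\times_c X^{k-2-p}\right]\times_4 X=(X^\top)^p\times_c X^{k-1-p}$ (the $\times_4$ multiplication acts only on the last mode) while $\frac{dX}{dX}\times_3(X^{k-1})^\top=(X^\top)^{k-1}\times_c I_n$; reindexing the sum then yields $\sum_{p=0}^{k-1}(X^\top)^p\times_c X^{k-1-p}$.

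The step I expect to be the main obstacle — or at least the most error-prone — is item (3), specifically pinning down precisely how $\times_3$ and $\times_4$ act on a $4$-order tensor (which pair of modes survives, and against which index of the matrix factor the contraction runs), and making the transpose $Y^\top$ appear with the indices in the right order; once (3) is correct, (4) and (5) are essentially immediate corollaries.
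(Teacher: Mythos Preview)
Your proposal is correct and follows essentially the same approach as the paper: items (1) and (2) by direct entry computation with Kronecker deltas, item (3) by the product/chain rule at the entry level, item (4) as the specialization $Y=Z=X$ of (3) combined with (1), and item (5) by induction using (3). The only cosmetic difference is that the paper proves (3) by writing out $\frac{d[(YZ)_{i_3i_4}]}{dX_{i_1i_2}}=\sum_k\bigl[\frac{dy_{i_3k}}{dx_{i_1i_2}}z_{ki_4}+y_{i_3k}\frac{dz_{ki_4}}{dx_{i_1i_2}}\bigr]$ directly from the scalar product rule rather than invoking Lemma~\ref{le: matrxderivchain23}(1), but the underlying computation is identical.
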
 
 
 \begin{proof}
 Let $\A=\frac{dX}{dX}$.  Then we have by definition and the independency of the elements of $X$ that 
 \[ A_{i_{1}i_{2}i_{3}i_{4}} = \frac{d x_{i_{3}i_{4}}}{d x_{i_{1}i_{2}}} = \delta_{i_{1}i_{3}} \delta_{i_{2}i_{4}} = (I_{m}\times_{c} I_{n})_{i_{1}i_{2}i_{3}i_{4}} \]
Thus (1) is proved. Similarly we can prove (2) by noticing that 
\[ \left[\frac{dX^{\top}}{dX}\right]_{i_{1}i_{2}i_{3}i_{4}} = \delta_{i_{1}i_{4}}\delta_{i_{2}i_{3}} \]
which implies 
\[ \frac{dX^{\top}}{dX} = I_{m}\times_{(2,3)} I_{n} =\K_{m,n}.  \]  
\indent  To prove (3), we let $Y\in \R^{p\times r}, Z\in \R^{r\times q}$. Then $\frac{d(YZ)}{dX}\in \R^{m\times n\times p\times q}$ whose elements are
\beyy  
(\frac{d(YZ)}{dX})_{i_{1}i_{2}i_{3}i_{4}} & = &  \frac{d[(YZ)_{i_{3}i_{4}}]}{dX_{i_{1}i_{2}}} = \frac{d[\sum\limits_{k} y_{i_{3}k} z_{k i_{4}}]}{d x_{i_{1}i_{2}}} = \sum\limits_{k} \frac{d(y_{i_{3}k} z_{k i_{4}})}{d x_{i_{1}i_{2}}} \\
                                                              & = &  \sum\limits_{k} \left[ \frac{d(y_{i_{3}k})}{d x_{i_{1}i_{2}}} z_{k i_{4}} + y_{i_{3}k}\frac{d(z_{k i_{4}})}{dx_{i_{1}i_{2}}} \right] \\
                                                              & = & (\frac{dY}{dX}\times_{4} Z)_{i_{1}i_{2}i_{3}i_{4}} +(\frac{dZ}{dX}\times_{3} Y^{\top})_{i_{1}i_{2}i_{3}i_{4}}
\eeyy   
\indent  To prove (4), we let $X\in \R^{n\times n}$ and take $Y=Z=X$. By (3) and (1), we have 
\beyy
\frac{dX^{2}}{dX}  &=& \frac{dX}{dX}\times_{4} X +X\times_{3} \frac{dX}{dX}\\
                             &=& (I_{n}\times_{c} I_{n})\times_{4} X +X\times_{3} (I_{n}\times_{c} I_{n})\\
                             &=& I_{n}\times_{c} X  + X^{\top}\times_{c} I_{n} 
\eeyy
\indent To prove (5), we use the induction method to $k$. For $k=1$, the result is immediate since both sides of (5) are identical to $I_{n}\times_{c} I_{n}$ by (1). 
The result is also valid for $k=2$ by (4).  Now suppose it is valid for a positive integer $k>2$.  We come to show its validity for $k+1$.  By (3) we have  
\beyy
\frac{dX^{k+1}}{dX}  &=& \frac{dX^{k}}{dX}\times_{4} X +(X^{\top})^{k}\times_{3} \frac{dX}{dX}\\
                                &=& \sum\limits_{p=0}^{k-1} \left[(X^{\top})^{p} \times_{c} X^{k-p}\right] + (I_{n}\times_{c} I_{n})\times_{3} (X^{k})^{\top}\\
                                &=& \sum\limits_{p=0}^{k-1} \left[(X^{\top})^{p} \times_{c} X^{k-p} \right]+(X^{\top})^{k}\times I_{n}\\
                                &=& \sum\limits_{p=0}^{k} \left[ (X^{\top})^{p} \times X^{k-p}\right]
\eeyy
Thus we complete the proof of (5). 
\end{proof}

\begin{cor}\label{co: 03} 
Let $X=(X_{ij})\in \R^{m\times n}$ be a matrix whose elements are independent, and $A\in \R^{p\times m},B\in \R^{n\times q}$ be the constant matrices.  Then 
\begin{description}
  \item[(1) ]  $\frac{d(AXB)}{dX} = A^{\top}\times_{c} B$.
  \item[(2) ]  $\frac{d(\det(X))}{dX} = \det(X) X^{-\top}$.
  \item[(3) ]  $\frac{d(\tr(X))}{dX} = I_{n}$ for $X\in \R^{n\times n}$.
  \item[(4) ]  $\frac{dX^{-1}}{dX} = - X^{-\top} \times X^{-1}$ when $X\in \R^{n\times n}$ is invertible.
\end{description} 
\end{cor}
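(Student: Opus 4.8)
The plan is to obtain all four identities from the entrywise definition of the tensor $\frac{dY}{dX}$ together with the product rule of Theorem~\ref{th: matrixderivat01}(3) and two standard facts of matrix calculus: Jacobi's cofactor formula for $\det$, and implicit differentiation of $XX^{-1}=I_{n}$. Parts (3) and (1) are almost immediate, part (2) is the scalar Jacobi formula transcribed into the present notation, and part (4) is the only one that needs a short computation, namely solving a linear tensor equation for $\frac{dX^{-1}}{dX}$. (The chain rule, Lemma~\ref{le: matrxcompderiv}, is available but not needed.)

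For (3), since $\tr(X)=\sum_{k}X_{kk}$ and the entries of $X$ are independent, $\left[\frac{d(\tr X)}{dX}\right]_{ij}=\frac{\partial}{\partial X_{ij}}\sum_{k}X_{kk}=\delta_{ij}=(I_{n})_{ij}$, which is the whole argument. For (1), I would either compute directly --- $(AXB)_{rs}=\sum_{k,l}A_{rk}X_{kl}B_{ls}$, so differentiating with respect to $X_{ij}$ leaves only the $k=i,\ l=j$ term, giving $A_{ri}B_{js}=(A^{\top})_{ir}B_{js}=(A^{\top}\times_{c}B)_{ijrs}$ --- or derive it from Theorem~\ref{th: matrixderivat01}: writing $AXB=A\,(XB)$ and using $\frac{dA}{dX}=\frac{dB}{dX}=0$, part (3) gives $\frac{d(AXB)}{dX}=\frac{d(XB)}{dX}\times_{3}A^{\top}$ and $\frac{d(XB)}{dX}=\frac{dX}{dX}\times_{4}B=(I_{m}\times_{c}I_{n})\times_{4}B=I_{m}\times_{c}B$ by Theorem~\ref{th: matrixderivat01}(1); a one-line mode-$3$ contraction then yields $(I_{m}\times_{c}B)\times_{3}A^{\top}=A^{\top}\times_{c}B$.

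For (2), expand $\det X$ along the $i$-th row: $\det X=\sum_{j}X_{ij}\,C_{ij}$, where the cofactor $C_{ij}$ involves no entry of row $i$; hence $\frac{\partial\det X}{\partial X_{ij}}=C_{ij}$. Since $C_{ij}=(\mathrm{adj}\,X)_{ji}$ and $\mathrm{adj}\,X=(\det X)\,X^{-1}$ for invertible $X$, we get $\left[\frac{d(\det X)}{dX}\right]_{ij}=\det(X)(X^{-1})_{ji}=\det(X)(X^{-\top})_{ij}$, i.e.\ the asserted formula (in general, without assuming invertibility, the identity reads $\frac{d(\det X)}{dX}=(\mathrm{adj}\,X)^{\top}$).

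For (4), differentiate the identity $XX^{-1}=I_{n}$. The right-hand side is constant, so its derivative tensor vanishes; Theorem~\ref{th: matrixderivat01}(3) with $Y=X,\ Z=X^{-1}$ then gives
\[
0=\frac{dX}{dX}\times_{4}X^{-1}+\frac{dX^{-1}}{dX}\times_{3}X^{\top}.
\]
Using Theorem~\ref{th: matrixderivat01}(1) together with $(I_{n}\times_{c}I_{n})\times_{4}X^{-1}=I_{n}\times_{c}X^{-1}$, this becomes $\frac{dX^{-1}}{dX}\times_{3}X^{\top}=-(I_{n}\times_{c}X^{-1})$; contracting both sides in the third mode with $X^{-\top}$ and simplifying yields $\frac{dX^{-1}}{dX}=-(I_{n}\times_{c}X^{-1})\times_{3}X^{-\top}=-X^{-\top}\times_{c}X^{-1}$, which is (4). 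I expect the only genuine obstacle to be the index bookkeeping in this last step: one must confirm that ``mode-$3$ multiplication by $X^{\top}$'' is invertible with inverse ``mode-$3$ multiplication by $X^{-\top}$'', and verify the auxiliary identities $(I_{n}\times_{c}I_{n})\times_{4}M=I_{n}\times_{c}M$ and $(I_{n}\times_{c}X^{-1})\times_{3}X^{-\top}=X^{-\top}\times_{c}X^{-1}$, both of which follow directly from the definitions of $\times_{c},\ \times_{3},\ \times_{4}$ and from Corollary~\ref{cor: tensorprod01}. Everything else reduces to entrywise differentiation and the already-established parts of Theorem~\ref{th: matrixderivat01}.
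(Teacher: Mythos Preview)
Your proposal is correct and mirrors the paper's proof almost exactly: part (1) is obtained via the product rule of Theorem~\ref{th: matrixderivat01}(3) applied to $A(XB)$ together with Theorem~\ref{th: matrixderivat01}(1), part (2) via cofactor expansion along a row, part (3) by direct entrywise differentiation, and part (4) by differentiating $XX^{-1}=I_{n}$ and solving the resulting tensor equation for $\frac{dX^{-1}}{dX}$. Your extra direct index computation for (1) and your explicit remark on inverting the mode-$3$ contraction in (4) are minor additions not spelled out in the paper, but the overall route is the same.
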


\begin{proof}
To prove (1), we take $Y=A, Z=XB$. By (4) of Theorem \ref{th: matrixderivat01}, we get 
\beq
\frac{d(AXB)}{dX}=\frac{d(XB)}{dX}\times_{3} A^{\top} = A^{\top}\times_{3} (\frac{d(X)}{dX}\times_{4} B)=A^{\top}\times_{3} [(I_{m}\times I_{n})\times_{4} B]=A^{\top}\times B
\eeq
To prove (2), we denote $A=\frac{d(detX)}{dX} =(A_{ij})$. Then for any given pair $(i, j)\in [m]\times [n]$, we have by the expansion of the determinant
\beyy
A_{ij} &=& \frac{d(\det(X))}{X_{ij}} =\frac{d}{dX_{ij}}(\sum\limits_{k=1}^{n} (-1)^{i+k}X_{ik} \det(X(i|k))) \\
         &=& (-1)^{i+j} \det(X(i|j)) =[\det(X)X^{-1}]_{ji}   
\eeyy
where $X(i|j)$ represents the submatrix of $X$ obtained by the removal of the $i$th row and the $j$th column of $X$.  Thus we have  $\frac{d(\det(X))}{dX} = \det(X) X^{-\top}$.  
Now (3) can be verified by noticing the fact that for all $(i,j)$ 
\[ [\frac{d(\tr(X))}{dX}]_{ij} = \sum\limits_{k=1}^{n} \frac{d(X_{kk})}{dX_{ij}} = \sum\limits_{k=1}^{n}\delta_{ik}\delta_{jk} =(e_{i}, e_{j})=\delta_{ij} \]
where $e_{i}\in \R^{n}$ is the $i$th row of the identity matrix $I_{n}$. Now we prove (4). Using (4) of Theorem \ref{th: matrixderivat01} on the equation $XX^{-1}=I_{n}$ (here $X=(X_{ij})\in \R^{n\times n}$), we have
\[ (\frac{d(X)}{dX})\times_{4} X^{-1} + X\times_{2} \frac{dX^{-1}}{dX} \]
It follows that 
\beyy 
\frac{dX^{-1}}{dX} &=& - X^{-1}\times_{2} [ \frac{d(X)}{dX}\times_{4} X^{-1}] \\
                             &=& - X^{-1}\times_{2} [(I_{n}\times I_{n}) \times_{4} X^{-1}] \\  
                             &=& - X^{-1}\times_{2} [(I_{n}\times X^{-1}] \\ 
                             &=& - X^{-\top}\times X^{-1} 
\eeyy
\end{proof}

\indent  Given any two 4-order tensors, say, 
\[ \A=(A_{i_{1}i_{2}j_{1}j_{2}})\in \R^{m_{1}\times m_{2}\times n_{1}\times n_{2}}, \B=(B_{i_{1}i_{2}j_{1}j_{2}})\in \R^{n_{1}\times n_{2}\times q_{1}\times q_{2}}. \]
The product of $\A,\B$, denoted by $\A\B$, is referred to as the 4-order tensor of size $m_{1}\times m_{2}\times q_{1}\times q_{2}$ whose entries are defined by 
\[ (\A\B)_{i_{1}i_{2}j_{1}j_{2}} = \sum\limits_{k_{1}, k_{2}} A_{i_{1}i_{2}k_{1}k_{2}}B_{k_{1}j_{2}j_{1}j_{2}} \]
This definition can also be carried over along other pair of directions. We will not go into detail at this point in this paper, and want to point out that all the results 
concerning the tensor forms of the derivatives can be transformed into the conventional matrix forms, which can be achieved by Kronecker product.

\section{On Gaussian matrices}
\setcounter{equation}{0} 
 
 In this section, we introduce and study the random matrices with Gaussian distributions and investigate the tensor products of such matrices. We denote $\norm{\cdot}$ for 
 the Euclidean norm and $S^{k-1}:=\set{s\in \R^{k}: \norm{s}=1}$ for the \emph{unit sphere} in $\R^{k}$ for any positive integer $k>1$.  Let $m, n>1$ be two 
 positive integers. Then $\al\in S^{m-1}, \be\in S^{n-1}$ implies $\al\otimes \be\in S^{mn-1}$ since $\norm{\al \otimes \be}=\norm{\al}\cdot{} \norm{\be}=1$. \\
 \indent Let $\bX=(x_{ij})\in \R^{m\times n}$ be a random matrix. The characteristic function (CF) of $\bX$ is defined by 
\[ \phi_{\bX}(T) = E[exp(\imath \tr(T^{\p}\bX))], \quad   \forall T\in \R^{m\times n} \] 
Note that $\phi_{\bX}(T) = \phi_{\bx}(\bft)$ where $\bx=\vecc(\bX)$ and $\bft=\vecc(T)$ are respectively the vectorization of $\bX$ and $T$. While vectorization allows us to 
treat all derivative (and thus the high order moments) of random matrices, it also pose a big challenge for identifying the $k$-moment corresponding to each coordinate of 
$\bX$. We introduce the tensor expression for all these basic terminology thereafter.   
\indent  For our convenience, we denote by $X_{i\cdot }$ ($X_{\cdot  j}$) the $i$th row (resp. $j$th column) of a random matrix $\bX$. $\bX$ is called a 
\emph{standard normally distributed} (\emph{SND}) or a \emph{SND} matrix if   
\begin{description}
  \item[(1)]  $X_{i\cdot }$'s i.i.d. with $X_{i\cdot }\sim \normal_{n}(0, I_{n})$;  
  \item[(2)]  $X_{\cdot j}$'s i.i.d. with $X_{\cdot  j}\sim \normal_{m}(0, I_{m})$.
\end{description}
that is, all the rows (and columns) of $X$ are i.i.d. with standard normal distribution. This is denoted by $X\sim \N_{m,n}(0,I_{m}, I_{n})$. \\ 
\indent  The following lemma concerning a necessary and sufficient condition for a SND random vector will be frequently used in the paper.  
\begin{lem}\label{lem: vecsnd}
Let $\bx\in \R^{n}$ be a random vector. Then $\bx\sim \normal_{n}(0, I_{n})$ if and only if $\al^{\top} \bx\sim \normal(0,1)$ for all unit vectors $\al\in S^{n-1}$. 
\end{lem}
\indent  Lemma \ref{lem: vecsnd} is immediate from the fact (see e.g. \cite{KR2005}) that $\bx\sim \normal_{n}(\mu, \Sig)$ if and only if  
\[ \al^{\top} \bx\sim \normal(\al^{\top}\mu, \al^{\top}\Sig \al), \quad \forall \al\in S^{n-1}.\]   
\indent  The following lemma presents equivalent conditions for a random matrix to be SND:
\begin{lem}\label{le: cond4sndm}
Let $X=(x_{ij})\in\R^{m\times n}$ be a random matrix. The following conditions are equivalent:
 \begin{description}
  \item[(1) ]  $X\sim \normal_{m,n}(0, I_{m}, I_{n})$.
  \item[(2) ]  $\vecc(X)\sim \normal_{mn}(0, I_{mn})$.
  \item[(3) ]  All $x_{ij}$ are i.i.d. with $x_{ij}\sim \normal(0,1)$. 
  \item[(4) ]  $\al^{\top} X\be \sim \normal (0, 1), \quad \forall\al\in S^{m-1}, \be\in \R^{n-1}$.
\end{description} 
\end{lem}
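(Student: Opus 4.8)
The plan is to establish the four equivalences through the cycle $(1)\Rightarrow(3)\Rightarrow(2)\Rightarrow(4)\Rightarrow(1)$, keeping each implication as light as possible and leaning on Lemma \ref{le: vecsnd} for the two implications that pass between one-dimensional and multivariate normality. The implication $(1)\Rightarrow(3)$ is pure bookkeeping: if the rows $X_{i\cdot}$ are i.i.d. with $X_{i\cdot}\sim\N_{n}(0,I_{n})$, then inside a fixed row the entries $x_{i1},\dots,x_{in}$ are i.i.d. $\N(0,1)$ because the row covariance is $I_{n}$, while entries lying in different rows are independent because the rows are; hence all $mn$ entries $x_{ij}$ are i.i.d. $\N(0,1)$. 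For $(3)\Rightarrow(2)$, note that $\vecc(X)$ is just the $mn$-vector obtained by stacking the $x_{ij}$ in a fixed order, so a vector all of whose coordinates are i.i.d. $\N(0,1)$, which is by definition $\vecc(X)\sim\N_{mn}(0,I_{mn})$.

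For $(2)\Rightarrow(4)$, fix $\al\in S^{m-1}$ and $\be\in S^{n-1}$ and write $\al^{\top}X\be=\tr\!\big((\al\be^{\top})^{\top}X\big)=\vecc(\al\be^{\top})^{\top}\vecc(X)$. Since $\norm{\al\be^{\top}}_{F}=\norm{\al}\cdot\norm{\be}=1$, the vector $v:=\vecc(\al\be^{\top})$ lies in $S^{mn-1}$, so Lemma \ref{le: vecsnd} applied to $\vecc(X)$ gives $v^{\top}\vecc(X)\sim\N(0,1)$, i.e. $\al^{\top}X\be\sim\N(0,1)$. For $(4)\Rightarrow(1)$, I would fix $\al\in S^{m-1}$ and consider the random vector $\by:=\al^{\top}X\in\R^{n}$; for every $\be\in S^{n-1}$ we have $\be^{\top}\by=\al^{\top}X\be\sim\N(0,1)$, so Lemma \ref{le: vecsnd} yields $\al^{\top}X\sim\N_{n}(0,I_{n})$. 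Choosing $\al=e_{i}$ shows $X_{i\cdot}\sim\N_{n}(0,I_{n})$ for each $i$, and interchanging the two factors (fix $\be$, vary $\al$, look at $X\be\in\R^{m}$) shows $X_{\cdot j}\sim\N_{m}(0,I_{m})$ for each $j$; what then remains is to promote these marginal statements to the \emph{mutual independence} of the rows (equivalently of the columns) demanded by condition (1).

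I expect this last point to be the main obstacle. The natural route is to observe that $\sum_{i}\al_{i}X_{i\cdot}=\al^{\top}X\sim\N_{n}(0,I_{n})$ for \emph{every} unit $\al$, combine this with the marginal normality of each row to force $\cov(X_{i\cdot},X_{j\cdot})=0$ for $i\neq j$, and then upgrade uncorrelatedness to independence via joint normality of $\vecc(X)$. The subtlety is precisely that condition (4) only controls the projections of $\vecc(X)$ onto the rank-one directions $\vecc(\al\be^{\top})$, which form a proper algebraic subvariety of $S^{mn-1}$, so Cram\'er--Wold-type uniqueness is not automatic; making the independence step rigorous therefore requires either an additional argument showing that these rank-one projections already pin down the law of $\vecc(X)$, or a slight strengthening of (4) (for instance, also demanding the joint normality of the rows). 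The three implications $(1)\Rightarrow(3)\Rightarrow(2)\Rightarrow(4)$, by contrast, are routine once the vectorization identities above are in hand.
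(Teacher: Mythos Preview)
Your three implications $(1)\Rightarrow(3)\Rightarrow(2)\Rightarrow(4)$ match the paper's argument essentially verbatim: the paper declares $(2)\Leftrightarrow(3)$ obvious, proves $(2)\Rightarrow(1)$ rather than your $(1)\Rightarrow(3)$ (same content, opposite direction), and proves $(2)\Rightarrow(4)$ with exactly the rank-one vectorization identity $\al^{\top}X\be=(\be\otimes\al)^{\top}\vecc(X)$ and Lemma~\ref{le: vecsnd} that you use.

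Where you and the paper diverge is on the return leg. The paper closes the loop via $(4)\Rightarrow(2)$: given any $\ga\in S^{mn-1}$ it writes $\ga=\vecc(A)$ with $\norm{A}_{F}=1$ and then asserts ``by (4), $\vecc(A)^{\top}\vecc(X)\sim\N(0,1)$''. But you have correctly put your finger on the weak point: condition (4) as stated only controls $\vecc(\al\be^{\top})^{\top}\vecc(X)$ for \emph{rank-one} $\al\be^{\top}$, not for a general unit-Frobenius-norm $A$, so the paper's step is exactly the unjustified leap you worried about in your $(4)\Rightarrow(1)$. In other words, the obstacle you identified is real, and the paper's own proof glosses over it rather than resolving it. Your proposal is therefore at least as complete as the paper's; the clean fix in either route is to strengthen (4) to range over all $T$ with $\norm{T}_{F}=1$ (equivalently $\tr(T^{\top}X)\sim\N(0,1)$), after which $(4)\Leftrightarrow(2)$ is immediate from Lemma~\ref{le: vecsnd}.
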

\begin{proof}
The equivalence of (2) and (3) is obvious. We now show $(1) \ifff (2) \ifff (4)$.  To show $(2)\implies (1)$, we denote $\bx:=\vecc(X)\in \R^{mn}$ and suppose that  
$\bx\sim\normal_{mn}(0, I_{mn})$. Then $\cov(X_{\cdot  i}, X_{\cdot , j}) =\Sig_{ij} =0$ for all distinct $i,j\in [n]$.  So the columns of $X$ are independent. Furthermore, 
we have by Lemma \ref{lem: vecsnd} that 
\beq\label{eq: prf01thm1}
\al^{\top} \bx \sim \normal (0, 1),\quad  \forall \al\in S^{mn-1}
\eeq
Now set $\al=e_{j}\otimes \be\in \R^{mn}$ ($\forall j\in [n]$) where $e_{j}\in \R^{n}$ is the $j$th coordinate vector of $\R^{n}$ and $\be\in S^{m-1}$. Then 
$\al\in S^{mn-1}$ and by (\ref{eq: prf01thm1}) we have 
\[ 
\be^{\top} X_{\cdot  j} =\be^{\top} X e_{j} =(e_{j}^{\top}\otimes \be^{\top}) \bx =(e_{j} \otimes \be)^{\top} \bx = \al^{\top}\bx\sim \normal(0,1)
\]
It follows by Lemma \ref{lem: vecsnd} that $X_{\cdot  j}\sim \normal_{m} (0, I_{m})$ for all $j\in [n]$. Consequently (2) implies (1) by definition. \\  
\indent $(2)\implies (4)$:  Denote $\ga:=\be\otimes \al$ for any given $\al\in S^{m-1}, \be\in S^{n-1}$. Then $\ga\in S^{mn-1}$. Since 
$\vecc(X)\sim \normal_{mn}(0, I_{mn})$, we have, by Lemma \label{lem: vecsnd}, that 
$\al^{\top}X\be =(\be^{\top}\otimes \al^{\top}) \vecc(X) =\ga^{\top} \vecc(X)\sim \normal(0,1)$, which proves (4). \\
\indent  To show $(4)\implies (2)$, we let $\ga\in S^{mn-1}$. Then there is a unique matrix $A=(a_{ij})\in \R^{m\times n}$ such that 
$\ga = \vecc(A)$ and $\norm{A}_{F}^{2} =\sum_{i,j}a_{ij}^{2} =\norm{\al}^{2}=1$ ($\norm{A}_{F}$ denotes the Frobenius norm of matrix $A$). 
Since $\norm{\vecc(A)}_{2} = \norm{A}_{F} =1$, $\vecc(A)\in S^{mn-1}$.  By (4), we have $\ga^{\top}\vecc(X)  = \vecc(A)^{\top} \vecc(X)\sim \normal (0,1)$.
Consequently we get $X\sim \normal_{mn}(0, I_{mn})$ by Lemma \ref{lem: vecsnd}. 
\end{proof} 
\indent The density function and the characteristic function of a SND random matrix \cite{BB1999, KR2005} can also be obtained by Lemma \ref{le: cond4sndm}. 
\begin{prop}\label{pp: snmfphi} 
Let $X\sim \normal_{m,n}(0,I_{m}, I_{n})$. Then 
\begin{description}
 \item[(1)]  $f_{X}(T)=(2\pi)^{-mn/2} \exp\set{-\frac{1}{2}\tr(T^{\top}T)}$ where $T\in \R^{m\times n}$.
 \item[(2)]  $\phi_{X}(T)=\exp\set{-\frac{1}{2}\tr(T^{\top}T)}$ where $T\in \R^{m\times n}$.
\end{description}
\end{prop}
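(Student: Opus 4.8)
The plan is to reduce both formulas to the equivalence established in Lemma~\ref{le: cond4sndm}(3), namely that $X\sim\N_{m,n}(0,I_m,I_n)$ means precisely that the $mn$ scalar entries $x_{ij}$ are i.i.d.\ $\N(0,1)$, together with the classical one-dimensional Gaussian density and characteristic function and the elementary trace identities $\tr(T^\top T)=\sum_{i,j}t_{ij}^2$ and $\tr(T^\top X)=\sum_{i,j}t_{ij}x_{ij}$.

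For part (1), I would argue as follows. Since the $x_{ij}$ are independent each with density $(2\pi)^{-1/2}\exp\set{-s^2/2}$, the joint density of the array $(x_{ij})$ at a point $T=(t_{ij})\in\R^{m\times n}$ is the product
\[ f_X(T)=\prod_{i\in[m],\,j\in[n]}(2\pi)^{-1/2}\exp\set{-\tfrac12 t_{ij}^2}=(2\pi)^{-mn/2}\exp\set{-\tfrac12\sum_{i,j}t_{ij}^2}. \]
It then remains only to note that the $j$th diagonal entry of $T^\top T$ equals $\sum_i t_{ij}^2$, whence $\sum_{i,j}t_{ij}^2=\tr(T^\top T)=\norm{T}_F^2$, which gives the stated form.

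For part (2), I would start from $\phi_X(T)=E[\exp(\imath\,\tr(T^\top X))]$ and rewrite the exponent as $\tr(T^\top X)=\sum_{i,j}t_{ij}x_{ij}$. Independence of the entries then factorizes the expectation:
\[ \phi_X(T)=E\Big[\exp\Big(\imath\sum_{i,j}t_{ij}x_{ij}\Big)\Big]=\prod_{i,j}E\big[\exp(\imath t_{ij}x_{ij})\big]=\prod_{i,j}\exp\set{-\tfrac12 t_{ij}^2}=\exp\set{-\tfrac12\tr(T^\top T)}, \]
using the scalar identity $E[\exp(\imath t Z)]=\exp\set{-t^2/2}$ for $Z\sim\N(0,1)$ and again $\sum_{i,j}t_{ij}^2=\tr(T^\top T)$. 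Equivalently, one can invoke the remark preceding the proposition, $\phi_X(T)=\phi_{\bx}(\bft)$ with $\bx=\vecc(X)$, $\bft=\vecc(T)$; by Lemma~\ref{le: cond4sndm}(2) we have $\bx\sim\N_{mn}(0,I_{mn})$, so $\phi_{\bx}(\bft)=\exp\set{-\tfrac12\norm{\bft}^2}$ is the standard multivariate Gaussian characteristic function, and $\norm{\vecc(T)}^2=\tr(T^\top T)$.

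I do not expect a real obstacle: once Lemma~\ref{le: cond4sndm} is available, the whole argument is bookkeeping. The only genuine external input is the scalar Gaussian density and the scalar characteristic function $E[\exp(\imath tZ)]=\exp\set{-t^2/2}$, which is standard (obtained, for instance, by completing the square in the exponent, or by differentiating under the integral sign and solving the resulting first-order ODE). The only points needing mild care are keeping the vectorization convention consistent with the one fixed earlier, so that the identity $\phi_X(T)=\phi_{\bx}(\bft)$ holds verbatim, and applying the two trace identities with the correct indexing.
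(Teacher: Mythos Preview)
Your proposal is correct and follows exactly the route the paper itself indicates: the sentence preceding the proposition states that both formulas ``can also be obtained by Lemma~\ref{le: cond4sndm}'', and no further proof is given there. Your entrywise factorization of the characteristic function is moreover precisely the computation the paper carries out in the tensor case (Lemma~\ref{le: sndtdensity}), so nothing in your argument deviates from the paper's intended method.
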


\indent  Let $M=(m_{ij})\in\R^{n_{1}\times n_{2}}$, and $\Sig_{k}=(\si_{ij}^{(k)})\in\R^{n_{k}\times n_{k}}$ be positive definite for $k=1,2$.  A random matrix 
$X=(X_{ij})\in \R^{n_{1}\times n_{2}}$ is called a \emph{Gaussian matrix}\footnote{We do not use the term normal matrix since it is referred to a matrix satisfying $XX^{\top} =X^{\top}X$} with parameters $(M, \Sig_{1}, \Sig_{2})$, written as $X\sim \N_{m,n}(M,\Sig_{1}, \Sig_{2})$, if  
\begin{description}
\item[(a)]  Each row $X_{i \cdot }$ follows a Gaussian distribution with    
\beq\label{eq: defnm01} 
X_{i\cdot}\sim \normal_{n}(M_{i\cdot},\si_{ii}^{(1)}\Sig_{2}), \quad  \forall i\in [m],
\eeq 
\item[(b)] Each column vector $X_{\cdot j}$ follows a Gaussian distribution with  
 \beq\label{eq: defnm02} 
X_{\cdot j}\sim \normal_{m}(M_{\cdot j},\si_{jj}^{(2)}\Sig_{1}), \quad  \forall j\in [n]
\eeq 
\end{description}
Such a random matrix $X$ is called a \emph{Gaussian matrix}. It follows that the vectorization of a Gaussian matrix $X$ is a Gaussian vector, i.e.,   
\beq\label{eq: vec2nm}
\vecc(X) \sim \normal_{mn}(\vecc(M), \Sig_{2}\otimes\Sig_{1})
\eeq
A Gaussian vector cannot be shaped into a Gaussian matrix if its covariance matrix possesses no Kronecker decomposition of two PSD matrices. For any two random 
matrices (vectors) $X,Y$ of the same size, we denote $X=Y$ if their distributions are identical. The following statement, which can be found in \cite{KR2005}, shows that 
an affine transformation preserves the Gaussian distribution.  
 \begin{lem}\label{le: gdtran}
Let $X\sim \N_{n_{1},n_{2}}(\mu, \Sig_{1}, \Sig_{2})$ and $Y=B_{1}XB_{2}^{\top}+C$ with $B_{i}\in\R^{m_{i}\times n_{i}}$ ($i=1,2$) being constant matrices.  Then
\[ Y\sim \N_{m_{1}, m_{2}}(C+B_{1}\mu B_{2}^{\top}, B_{1}\Sig_{1}B_{1}^{\top}, B_{2}\Sig_{2}B_{2}^{\top})  \]   
\end{lem}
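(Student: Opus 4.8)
The plan is to reduce the matrix statement to the known vectorized version via the identity $\vecc(B_1 X B_2^\top) = (B_2 \otimes B_1)\vecc(X)$, together with the affine transformation rule for ordinary Gaussian vectors and equation (3.8). First I would write $\bx := \vecc(X)$ and $\by := \vecc(Y)$. By (3.8), since $X \sim \N_{n_1,n_2}(\mu, \Sig_1, \Sig_2)$, we have $\bx \sim \N_{n_1 n_2}(\vecc(\mu), \Sig_2 \otimes \Sig_1)$. Applying $\vecc$ to $Y = B_1 X B_2^\top + C$ and using the standard Kronecker identity gives $\by = (B_2 \otimes B_1)\bx + \vecc(C)$, an affine transformation of a Gaussian vector. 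Hence $\by$ is Gaussian with mean $\vecc(C) + (B_2 \otimes B_1)\vecc(\mu) = \vecc(C + B_1 \mu B_2^\top)$ and covariance
\[ (B_2 \otimes B_1)(\Sig_2 \otimes \Sig_1)(B_2 \otimes B_1)^\top = (B_2 \Sig_2 B_2^\top) \otimes (B_1 \Sig_1 B_1^\top), \]
using the mixed-product property of the Kronecker product and $(B_2 \otimes B_1)^\top = B_2^\top \otimes B_1^\top$.

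Next I would run this implication in reverse through (3.8): the covariance just computed is a Kronecker product of the two PSD matrices $B_2 \Sig_2 B_2^\top$ and $B_1 \Sig_1 B_1^\top$, so the Gaussian vector $\by$ is exactly the vectorization of a Gaussian matrix with parameters $(C + B_1 \mu B_2^\top,\, B_1 \Sig_1 B_1^\top,\, B_2 \Sig_2 B_2^\top)$. Therefore
\[ Y \sim \N_{m_1,m_2}(C + B_1 \mu B_2^\top,\, B_1 \Sig_1 B_1^\top,\, B_2 \Sig_2 B_2^\top), \]
which is the claim. Alternatively, one could bypass vectorization and verify the two defining row/column conditions (a) and (b) directly: each row of $Y$ is an affine image of the rows of $X$ (through $B_2$ after left-multiplication by a row of $B_1$), and Lemma~\ref{le: gdtran} applied at the vector level to each $Y_{i\cdot}$ and $Y_{\cdot j}$ gives the stated parameters; but this requires tracking how the two scaling factors $\si_{ii}$ and $\si_{jj}$ transform, which is cleaner to see through the Kronecker structure.

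The only genuinely delicate point is the ``reverse direction'' of (3.8): one must be sure that a Gaussian vector whose covariance happens to factor as a Kronecker product of two PSD matrices really does come from a Gaussian matrix with the corresponding three parameters — i.e. that the matrix normal distribution is well-defined up to the usual scaling ambiguity (replacing $(\Sig_1, \Sig_2)$ by $(c\Sig_1, c^{-1}\Sig_2)$). Since the statement of the lemma fixes particular representatives $B_1 \Sig_1 B_1^\top$ and $B_2 \Sig_2 B_2^\top$, and since the row/column marginal conditions (a)–(b) are consistent with these, this causes no real trouble; it is the one spot where I would be careful to invoke the definition of a Gaussian matrix rather than treat the correspondence as a literal bijection. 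Everything else is routine Kronecker algebra, and I would state the needed identities ($\vecc(AXB) = (B^\top \otimes A)\vecc(X)$, mixed-product property, transpose of a Kronecker product) without reproving them.
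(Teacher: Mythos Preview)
The paper does not actually prove this lemma: it is stated with the remark that it ``can be found in \cite{KR2005}'' and no proof is given. Your vectorization argument is correct and is precisely the standard route (and essentially what one finds in Kollo--von~Rosen): pass to $\vecc(X)$, use $\vecc(B_1XB_2^{\top})=(B_2\otimes B_1)\vecc(X)$, apply the affine rule for Gaussian vectors, and simplify the covariance via the mixed-product property.

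Your caution about the ``reverse direction'' of the vectorization identity is the right instinct, but in this paper's setup there is a cleaner way to close it than checking conditions (a)--(b) by hand. Once you have shown that $\vecc(Y)$ is Gaussian with mean $\vecc(C+B_1\mu B_2^{\top})$ and covariance $(B_2\Sig_2B_2^{\top})\otimes(B_1\Sig_1B_1^{\top})$, the row and column marginals of $Y$ are obtained by hitting $\vecc(Y)$ with $e_j^{\top}\otimes I_{m_1}$ and $I_{m_2}\otimes e_i^{\top}$ respectively; these are again affine maps of a Gaussian vector, and the resulting covariances are exactly $\si^{(2)}_{jj}\,B_1\Sig_1B_1^{\top}$ and $\si^{(1)}_{ii}\,B_2\Sig_2B_2^{\top}$ (with $\si^{(k)}_{ii}$ now denoting the diagonal entries of $B_k\Sig_kB_k^{\top}$), which is precisely what (a)--(b) demand. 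So the reverse step is not merely ``up to scaling ambiguity'' but literally recovers the defining conditions. Alternatively, if one is willing to assume the $\Sig_i$ are nonsingular, Lemma~\ref{le: gd2sgd} gives $X=A_1ZA_2^{\top}+\mu$ with $Z$ SND, whence $Y=(B_1A_1)Z(B_2A_2)^{\top}+(C+B_1\mu B_2^{\top})$ and the conclusion is immediate; this is arguably the argument the paper has in mind given the surrounding text.
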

\indent  The following statement can be regarded as an alternative definition of a Gaussian matrix.\\      
\begin{lem}\label{le: gd2sgd}
Let $X=(x_{ij})\in \R^{n_{1}\times n_{2}}$ be a random matrix and $\Sig_{i}=A_{i}A_{i}^{\top}$ with each $A_{i}\in\R^{n_{i}\times n_{i}}$ nonsingluar ($i=1,2$). Then  
$X\sim \N_{n_{1},n_{2}}(M, \Sig_{1}, \Sig_{2})$ if and only if there exist a SND random matrix $Z\in \R^{p\times q}$ such that 
\beq\label{eq:snd2n}
X=A_{1}ZA_{2}^{\top}+M
\eeq 
where $M\in \R^{m\times n}$ is a constant matrix.   
\end{lem}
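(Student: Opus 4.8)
The plan is to prove both directions of the equivalence using the results already established. For the ``if'' direction, I would assume $X = A_1 Z A_2^\top + M$ for some SND matrix $Z \sim \N_{p,q}(0, I_p, I_q)$. Since $\Sig_i = A_i A_i^\top$ forces $A_i$ to be $n_i \times n_i$ and nonsingular, the inner dimensions must match, i.e. $p = n_1$ and $q = n_2$, so $Z \sim \N_{n_1,n_2}(0, I_{n_1}, I_{n_2})$. Then I would apply Lemma \ref{le: gdtran} (the affine-transformation lemma) with $B_1 = A_1$, $B_2 = A_2$, and the constant $C = M$: this immediately gives $X \sim \N_{n_1,n_2}(M + A_1 \cdot 0 \cdot A_2^\top, A_1 I_{n_1} A_1^\top, A_2 I_{n_2} A_2^\top) = \N_{n_1,n_2}(M, \Sig_1, \Sig_2)$, as desired.

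For the ``only if'' direction, I would suppose $X \sim \N_{n_1,n_2}(M, \Sig_1, \Sig_2)$ and define $Z := A_1^{-1}(X - M)(A_2^{-1})^\top = A_1^{-1}(X-M) A_2^{-\top}$, which makes sense because each $A_i$ is nonsingular. Again by Lemma \ref{le: gdtran}, with $B_1 = A_1^{-1}$, $B_2 = A_2^{-1}$, and $C = -A_1^{-1} M A_2^{-\top}$ applied to $X$, one obtains
\[
Z \sim \N_{n_1,n_2}\bigl(A_1^{-1}(M - M)A_2^{-\top},\ A_1^{-1}\Sig_1 A_1^{-\top},\ A_2^{-1}\Sig_2 A_2^{-\top}\bigr).
\]
Since $\Sig_i = A_i A_i^\top$, we get $A_i^{-1}\Sig_i A_i^{-\top} = A_i^{-1} A_i A_i^\top A_i^{-\top} = I_{n_i}$, so $Z \sim \N_{n_1,n_2}(0, I_{n_1}, I_{n_2})$, i.e. $Z$ is SND. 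Rearranging the definition of $Z$ gives $X = A_1 Z A_2^\top + M$, which is exactly \eqref{eq:snd2n} with $p = n_1$, $q = n_2$.

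The argument is essentially a two-line application of Lemma \ref{le: gdtran} in each direction, so there is no deep obstacle. The one point that needs a little care is the bookkeeping of dimensions: the statement writes $Z \in \R^{p\times q}$ and $M \in \R^{m\times n}$, but consistency of the factorizations $\Sig_i = A_i A_i^\top$ with $A_i \in \R^{n_i \times n_i}$ nonsingular forces $p = m = n_1$ and $q = n = n_2$, and I would note this explicitly at the start so that the affine-transformation lemma applies cleanly. A secondary minor point is to observe that the map $Z \mapsto A_1 Z A_2^\top + M$ is a bijection on $\R^{n_1 \times n_2}$ (its inverse being $X \mapsto A_1^{-1}(X-M)A_2^{-\top}$), which is what lets the two directions be genuine converses of one another rather than merely one-sided implications.
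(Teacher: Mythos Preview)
Your argument is correct: both directions follow immediately from Lemma~\ref{le: gdtran} applied to the affine maps $Z\mapsto A_{1}ZA_{2}^{\top}+M$ and $X\mapsto A_{1}^{-1}(X-M)A_{2}^{-\top}$, and your remark about the dimensions being forced to $p=n_{1}$, $q=n_{2}$ is a useful clarification of the statement. The paper itself does not supply a proof of this lemma at all --- it simply refers the reader to \cite{BB1999} --- so there is no ``paper's own proof'' to compare against; your write-up is exactly the standard derivation and is what one would expect a self-contained proof to look like.
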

\indent The proof of Lemma \ref{le: gd2sgd} can be found in \cite{BB1999}.  Now we let $X\sim \N_{n_{1}, n_{2}}(M,\Sig_{1}, \Sig_{2})$ be a Gaussian matrix where 
$M=(m_{ij})\in\R^{n_{1}\times n_{2}}$ and $\Sig_{k}=(\si_{ij}^{(k)})\in\R^{n_{k}\times n_{k}}$ ($k=1,2$) being positive definite. Write  
\beq\label{eq:omegT}
\om_{T}=\tr[(T-M)^{\top}\Sig_{1}^{-1}(T-M)\Sig_{2}^{-1}]
\eeq  
where $T\in \R^{n_{1}\times n_{2}}$ is arbitrary.  The characteristic function (CF) of $X$ is defined as   
\beq\label{eq:cf4X} 
\phi_{X}(T) := E[\exp(\imath\seq{X,T})], \quad  T\in \R^{n_{1}\times n_{2}}
\eeq 
We have 
\begin{cor}\label{co: matrxdensitychar}
Let $X\in \R^{n_{1}\times n_{2}}$ be a random matrix. Then the density and the characteristic function of $X$ are respectively 
\beq\label{eq: df4X}
 f_{X} (T) = (2\pi)^{-n_{1}n_{2}/2} (\det(\Sig_{1}))^{-n_{2}/2} (\det(\Sig_{2}))^{-n_{1}/2} \exp\set{-\frac{1}{2}\om_{T} }
  \eeq 
and   
  \beq\label{eq:cf4X2}
 \phi_{X} (T) = \exp\set{\imath \tr(T^{\top} M)-\frac{1}{2} \tr(T^{\top}\Sig_{1} T\Sig_{2})}
  \eeq 
where $T$ takes values in $\R^{n_{1}\times n_{2}}$. 
\end{cor}

\indent Lemma \ref{le: gdtran} can be used to justify the definition of Gaussian matrices if we take Lemma \ref{le: gd2sgd} as the original one. 
Let $X\sim \N_{n_{1},n_{2}}(\mu,\Sig_{1}, \Sig_{2})$, $A=I_{m}$ and $B=e_{j}$ ($\forall j\in [n]$) is the $j$th coordinate vector of $\R^{n}$. Then we have  
\[ AXB=X_{\cdot j},  AMB=\mu_{\cdot j}, A\Sig_{1}A^{\top}=\Sig_{1}, B\Sig_{2}B^{\top}=\si_{jj}^{2}, \]
Thus $\bx_{j}\sim \normal_{m,1} (\mu_{\cdot j}, \Sig_{1}, \si_{jj}^{2})$ by Lemma \ref{le: gdtran}, which is equivalent to (\ref{eq: defnm02}). This argument also applies 
to prove (\ref{eq: defnm01}). Furthermore, $x_{ij}\sim \normal(\mu_{ij}, (\si^{(1)}_{ii}\si^{(2)}_{jj})^{2})$ for all $i\in [n_{1}],  j\in [n_{2}]$. \\ 
\indent For any matrix $A\in \R^{m\times n}$, we use $A[S_{1}|S_{2}]$ to denote the submatrix of $A$ whose entries $a_{ij}$'s are confined in $i\in S_{1}, j\in S_{2}$ 
where $\emptyset\neq S_{1}\subset [m], \emptyset \neq S_{2}\subset [n]$. This is denoted by $A[S]$ when $S_{1}=S_{2}=S$.  It follows from Lemma \ref{le: gdtran} 
that any submatrix of a Gaussian matrix is also Gaussian. \\ 
\begin{cor}\label{co: subgaussm}
Let $X\sim \N_{n_{1},n_{2}}(\mu,\Sig_{1}, \Sig_{2})$. and $\empty\neq S_{i}\subset [n_{i}]$ with cardinality $\abs{S_{i}}=r_{i}$ for $i=1,2$. Then  
\beq\label{subgauss} 
X[S_{1}|S_{2}] \sim \N_{r_{1}, r_{2}} (\mu[S_{1}|S_{2}], \Sig_{1}[S_{1}], \Sig_{2}[S_{2}])  
\eeq   
\end{cor}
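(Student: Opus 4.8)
The plan is to deduce Corollary~\ref{co: subgaussm} directly from Lemma~\ref{le: gdtran} by exhibiting the submatrix extraction $X\mapsto X[S_1|S_2]$ as an affine (indeed linear) transformation of the form $X\mapsto B_1 X B_2^\top$ for suitable constant $0$--$1$ matrices $B_1,B_2$. Concretely, for $i=1,2$ write $S_i=\{s^{(i)}_1<s^{(i)}_2<\cdots<s^{(i)}_{r_i}\}\subset[n_i]$ and let $B_i\in\R^{r_i\times n_i}$ be the selection matrix whose $k$th row is the coordinate vector $e_{s^{(i)}_k}^\top\in\R^{n_i}$; equivalently $(B_i)_{k\ell}=1$ if $\ell=s^{(i)}_k$ and $0$ otherwise. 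First I would verify the elementary identity $\bigl(B_1 X B_2^\top\bigr)_{k\ell}=X_{s^{(1)}_k\, s^{(2)}_\ell}$, so that $B_1 X B_2^\top = X[S_1|S_2]$; the same computation with $X$ replaced by the constant matrix $\mu$ gives $B_1\mu B_2^\top=\mu[S_1|S_2]$.

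Next I would compute the two transformed covariance parameters. Since $B_1$ selects rows $S_1$, a direct index chase gives $(B_1\Sig_1 B_1^\top)_{k\ell}=(\Sig_1)_{s^{(1)}_k\, s^{(1)}_\ell}$, i.e.\ $B_1\Sig_1 B_1^\top=\Sig_1[S_1]$, and likewise $B_2\Sig_2 B_2^\top=\Sig_2[S_2]$. Then I apply Lemma~\ref{le: gdtran} with $C=0$, the constant matrices $B_1\in\R^{r_1\times n_1}$ and $B_2\in\R^{r_2\times n_2}$, and $X\sim\N_{n_1,n_2}(\mu,\Sig_1,\Sig_2)$, which yields
\[
X[S_1|S_2]=B_1 X B_2^\top \sim \N_{r_1,r_2}\bigl(B_1\mu B_2^\top,\ B_1\Sig_1 B_1^\top,\ B_2\Sig_2 B_2^\top\bigr) = \N_{r_1,r_2}\bigl(\mu[S_1|S_2],\Sig_1[S_1],\Sig_2[S_2]\bigr),
\]
which is exactly \eqref{subgauss}.

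There is essentially no hard step here: the whole argument is bookkeeping about selection matrices, and the only mild subtlety is that Lemma~\ref{le: gdtran} as stated requires nothing of $B_1,B_2$ beyond being constant (no rank or squareness hypothesis), so the reduction is legitimate even though $B_i$ is a ``fat'' non-invertible matrix. If one instead wished to invoke Lemma~\ref{le: gd2sgd}, one would need $\Sig_1[S_1]$ and $\Sig_2[S_2]$ to be positive definite in order to factor them; this is automatic because a principal submatrix of a positive definite matrix is positive definite, but it is cleaner to route through Lemma~\ref{le: gdtran} and avoid the factorization altogether. I would present the selection-matrix identities briefly and let Lemma~\ref{le: gdtran} do the remaining work.
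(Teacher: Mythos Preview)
Your proposal is correct and follows essentially the same approach as the paper: the paper also introduces selection matrices $P_i$ (your $B_i$) built from coordinate vectors, writes $X[S_1|S_2]=P_1XP_2^\top$, applies Lemma~\ref{le: gdtran}, and identifies $P_1\mu P_2^\top=\mu[S_1|S_2]$ and $P_i\Sig_iP_i^\top=\Sig_i[S_i]$. Your additional remarks on why Lemma~\ref{le: gdtran} suffices without rank hypotheses and why routing through Lemma~\ref{le: gd2sgd} is unnecessary are accurate but not present in the paper's version.
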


\begin{proof}
We may assume that $S_{1}=\set{i_{1}<i_{2}<\ldots <i_{r_{1}}}, S_{2}=\set{j_{1} < j_{2}< \ldots < j_{r_{2}}}$, and for $i=1,2$, choose matrix  
\[ P_{1}^{\top} =[e_{i_{1}}, e_{i_{2}},\ldots, e_{i_{r_{1}}}], \quad  P_{2}^{\top} =[f_{j_{1}}, f_{j_{2}},\ldots, f_{j_{r_{2}}}] \]
where $e_{k}\in \R^{n_{1}}$ is the $k$th coordinate (column) vector in $\R^{n_{1}}$, and $f_{k}\in \R^{n_{2}}$ is the $k$th coordinate (column) vector in $\R^{n_{2}}$, thus 
we have $P_{i}\in \R^{r_{i}\times n_{i}}$ for $i=1,2$. Since $X[S_{1}|S_{2}] =P_{1}XP_{2}^{\top}$, we have 
\[  X[S_{1}|S_{2}] \sim \N_{r_{1}, r_{2}} (P_{1} \mu P_{2}^{\top}, P_{1}\Sig_{1}P_{1}^{\top}, P_{2}\Sig_{2}P_{2}^{\top} )  \]
Then (\ref{subgauss}) follows by noticing that $\mu[S_{1}|S_{2}]=P_{1}\mu P_{2}^{\top}$ and  $\Sig_{i}[S_{i}]=P_{i}\Sig_{i}P_{i}^{\top}$ ($i=1,2$). 
\end{proof} 
\indent For a random vector $\bx\sim \normal_{n}(\mu, \Sig)$, we have $\bx=\mu +A\by$ with $\mu\in \R^{n}, A\in \R^{n\times n}$ satisfying
$AA^{\top}=\Sig$ and $\by\sim \normal_{n}(0, I_{n})$.  It follows that $m_{1}[\bx]=\mu$ and 
 \beyy  
 m_{2}[\bx] &=& E[\bx\bx^{\top}]= E[(\mu +A\by)(\mu +A\by)^{\top}] \\
                  &=& \mu\mu^{\top} +AE[\by\by^{\p}] A^{\top}\\
                  &=& \mu\mu^{\top} +AA^{\top} = \mu\mu^{\top} +\Sig
 \eeyy
since $E[\by\by^{\top}]=m_{2}[\by]=I_{n}$.  The $k$-moment of a random matrix $X\in \R^{m\times n}$ is defined as the $2k$-order tensor 
$E[X^{(k)}]$ which is of size $m^{[k]}\times n^{[k]}$.  Write $m_{k}[X]=(\mu_{i_{1}i_{2}\ldots i_{k}j_{1}j_{2}\ldots j_{k}})$. By definition 
\[ \mu_{i_{1}i_{2}\ldots i_{k}j_{1}j_{2}\ldots j_{k}} = E[x_{i_{1}j_{1}}x_{i_{2}j_{2}}\ldots x_{i_{k}j_{k}}]  \]
 
\begin{lem}\label{le: snm}
Let $Y\in \R^{m\times n}$ be a standard normal matrix (SNM), i.e., $Y\sim\normal_{m,n}(0, I_{m}, I_{n})$. Then 
\beq\label{eq: snmm2} 
m_{2}[Y]=E[Y\times Y] =I_{m}\times_{c} I_{n}
\eeq
\end{lem}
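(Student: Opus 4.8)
The plan is to push everything down to scalar computations by using the characterisation in Lemma~\ref{le: cond4sndm}: $Y\sim\N_{m,n}(0,I_m,I_n)$ holds if and only if the entries $y_{ij}$ are mutually independent with $y_{ij}\sim\N(0,1)$ (part (3) of that lemma). Once we may treat the $y_{ij}$ as i.i.d.\ standard normal scalars, both claims reduce to the elementary facts $E[y_{ij}]=0$ and $E[y_{ij}y_{kl}]=\delta_{ik}\delta_{jl}$.

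First, since $m_1[Y]=E[Y]$ is the $m\times n$ matrix whose $(i,j)$-entry is $E[y_{ij}]$, and each $y_{ij}\sim\N(0,1)$, every such entry is $0$; hence $m_1[Y]=0$. Next I would prove (\ref{eq: snmm2}) entrywise. By the definition of the tensor product, $(Y\times Y)_{i_1i_2i_3i_4}=y_{i_1i_2}\,y_{i_3i_4}$, so
\[
m_2[Y]_{i_1i_2i_3i_4}=E\bigl[(Y\times Y)_{i_1i_2i_3i_4}\bigr]=E[y_{i_1i_2}\,y_{i_3i_4}].
\]
By Lemma~\ref{le: cond4sndm}(3) the scalars $y_{i_1i_2}$ and $y_{i_3i_4}$ are independent unless $(i_1,i_2)=(i_3,i_4)$; in the coincident case $E[y_{i_1i_2}\,y_{i_3i_4}]=E[y_{i_1i_2}^2]=1$, and otherwise it factors as $E[y_{i_1i_2}]\,E[y_{i_3i_4}]=0$. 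Therefore
\[
m_2[Y]_{i_1i_2i_3i_4}=\delta_{i_1i_3}\,\delta_{i_2i_4},
\]
which is exactly the entrywise description of $I_m\times_c I_n$ obtained in the proof of Theorem~\ref{th: matrixderivat01}(1) (see also Corollary~\ref{cor: tensorprod01}(3), where $I_m\times_c I_n$ is identified with the identity tensor of $\R^{m\times n\times m\times n}$). This gives (\ref{eq: snmm2}).

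A coordinate-free variant is available through Lemma~\ref{le: cond4sndm}(2): since $\vecc(Y)\sim\N_{mn}(0,I_{mn})$, the vector computation performed just before the lemma yields $E[\vecc(Y)\vecc(Y)^\top]=m_2[\vecc(Y)]=I_{mn}$, and reshaping this $mn\times mn$ matrix back into a $4$-order tensor on $\R^{m\times n\times m\times n}$ again produces $I_m\times_c I_n$. There is no genuine obstacle in this lemma; the only point requiring care is the index bookkeeping, i.e.\ checking that the pattern $\delta_{i_1i_3}\delta_{i_2i_4}$ corresponds to $I_m\times_c I_n$ rather than to one of the other $4$-order identity-type tensors ($I_m\times I_n$ or $\K_{m,n}$) introduced in Section~2.
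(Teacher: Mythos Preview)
Your proof is correct and follows essentially the same route as the paper: compute $m_2[Y]$ entrywise via $E[y_{i_1i_2}y_{i_3i_4}]=\delta_{i_1i_3}\delta_{i_2i_4}$ using the i.i.d.\ standard normal characterisation from Lemma~\ref{le: cond4sndm}, then identify this pattern with $(I_m\times_c I_n)_{i_1i_2i_3i_4}$. The paper's version is terser (it writes the expectation as a covariance and invokes Lemma~\ref{le: cond4sndm} once), but the substance is identical; your added remark about the vectorised variant and the care distinguishing $I_m\times_c I_n$ from $I_m\times I_n$ or $\K_{m,n}$ is a welcome clarification rather than a different argument.
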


\begin{proof}
Denote $\by:=\vecc(Y)$. Then $\by\sim \normal_{mn}(0, I_{mn})$ by the hypothesis.  It follows that $m_{2}[\by]=I_{mn}$ by Lemma \ref{le: cond4sndm}.  Now 
let $Z = Y\times Y$ and $M^{(2)}=m_{2}[Y]$.  Then  
\beq\label{eq:prf4snm}
M^{(2)}_{i_{1}i_{2}i_{3}i_{4}} = E[y_{i_{1}i_{2}}y_{i_{3}i_{4}}] = \cov(y_{i_{1}i_{2}}, y_{i_{3}i_{4}}) =\delta_{i_{1}i_{3}} \delta_{i_{2}i_{4}} =(I_{m}\times_{c} I_{n})_{i_{1}i_{2}i_{3}i_{4}} 
\eeq
for any $(i_{1},i_{2},i_{3},i_{4})$.  So (\ref{eq: snmm2}) holds. The third equality in (\ref{eq:prf4snm}) is due to Lemma \ref{le: cond4sndm}.  
\end{proof}
 
\indent Let $X$ be a random matrix following MND $X\sim\normal_{n_{1},n_{2}}(\mu, \Sig_{1}, \Sig_{2})$ with $\mu\in \R^{n_{1}\times n_{2}}$ and 
$\Sig_{i}\in \R^{n_{i}\times n_{i}}$ is PSD, $i=1,2$.  Write $\Sig_{i}=A_{i}A_{i}^{\top}$ with $A_{i}\in\R^{n_{i}\times n_{i}}$ being nonsingular ($i=1,2$).  
Denote $Z=A_{1}^{-1}(X-\mu) A_{2}^{-\top}$.  Then $Z\in \R^{n_{1}\times n_{2}}$ is a SND matrix and we have 
\beq\label{eq: fromZ2X}
 X = \mu + A_{1}ZA_{2}^{\top}
\eeq 
It follows that $m_{1}[X]=E[X] = \mu \in \R^{n_{1}\times n_{2}}$  and $m_{2}[X]=E[X\times X]$ whose entries are defined by  
\beq\label{eq: m2entry} 
m^{(2)}_{i_{1}i_{2}i_{3}i_{4}} = E[x_{i_{1}i_{2}} x_{i_{3}i_{4}}]=\cov(X_{i_{1}i_{2}}, X_{i_{3}i_{4}})
\eeq
Thus $m_{2}[X] \in \R^{n_{1}\times n_{2}\times n_{1}\times n_{2}}$.  An \emph{$k$-moment} of $X$ is defined as an $2k$-order tensor 
$M_{k}[X] := E[X^{[k]}]$ (with size $(n_{1}\times n_{2})^{[k]}$).  Then each entry of $M_{k}[X]$ can be described as 
\[ (M_{k}[X])_{i_{1}i_{2}\ldots i_{k}j_{1}j_{2}\ldots j_{k}} = E[X_{i_{1}j_{1}}X_{i_{2}j_{2}}\ldots X_{i_{k}j_{k}}]  \]
For any matrices $A,B,C,D$ and non-overlapped subset $\set{s_{i},t_{i}}\subset [8]$ with $s_{i} < t_{i}$. The tensor 
\[ \T = (T_{i_{1}i_{2}\ldots i_{8}}) \equiv A\times_{(s_{2},t_{2})} B\times_{(s_{3},t_{3})} C\times_{(s_{4},t_{4})} D \]
yields an 8-order tensor whose entries are defined by 
\[ T_{i_{1}i_{2}\ldots i_{8}} = A_{i_{s_{1}}i_{t_{1}}}B_{i_{s_{2}}i_{t_{2}}}C_{i_{s_{3}}i_{t_{3}}}D_{i_{s_{4}}i_{t_{4}}} \]
where $\set{s_{1},t_{1}} = (\cup_{k=2}^{4} \set{s_{k}, t_{k}})^{c}$. \\ 

\indent Let $\A, \B$ be tensors of order $p$ and $q$ respectively.  The \emph{tensor product} $\C:=\A\times \B$ is an $(p+q)$-order tensor whose components 
are defined by 
\[ C_{i_{1}i_{2}\ldots i_{p}j_{1}j_{2}\ldots j_{q}} = A_{i_{1}i_{2}\ldots i_{p}} B_{j_{1}j_{2}\ldots j_{q}} \]
For $p=q$, we can also define the \emph{cross tensor product} of $\A,\B$, as the $2p$-order tensor $\D=\A\times_{c} \B$ defined by 
\[ D_{i_{1}j_{1} i_{2}j_{2}\ldots i_{p} j_{p}} = A_{i_{1}i_{2}\ldots i_{p}} B_{j_{1}j_{2}\ldots j_{p}} \]   
Note that when $A\in \R^{m\times n}, B\in \R^{m\times n}$, we have 
\[ A\times B=A\times_{(3,4)} B\in \R^{m\times n\times m\times n}, \quad   A\times_{c} B=A\times_{(2,4)} B\in \R^{m\times m\times n\times n}. \]
In the following when we write $A\times B$ we usually mean $A\times_{(3,4)} B$, the tensor product of $A$ and $B$, if not mentioned the other way.   
 
\begin{lem}\label{le: derA}
Let $A(T):=\imath \mu - \Sig_{1} T \Sig_{2}$. Then we have  
\begin{description}
  \item[(1) ]  $\frac{dA}{dT} = - \Sig_{1}\times \Sig_{2}$.  
  \item[(2) ]  $\frac{d(A\times_{c} A)}{dT} = \frac{dA}{dT}\times_{(3,6)} A + \frac{dA}{dT}\times_{c} A$. 
\end{description}
\end{lem}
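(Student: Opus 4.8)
The plan is to treat the two items separately, in each case pushing the computation onto the matrix‑derivative identities already in hand and then re‑expressing the answer in tensor‑product form. For item (1), observe first that $\imath\mu$ is a constant matrix, so $\frac{d(\imath\mu)}{dT}$ is the zero tensor and $\frac{dA}{dT}=-\frac{d(\Sig_{1}T\Sig_{2})}{dT}$. Applying Corollary \ref{co: 03}(1) with the constant matrices $\Sig_{1},\Sig_{2}$ playing the roles of $A,B$ and the variable matrix $T$ playing the role of $X$ gives $\frac{d(\Sig_{1}T\Sig_{2})}{dT}=\Sig_{1}^{\top}\times_{c}\Sig_{2}$; since $\Sig_{1}$ is positive definite, hence symmetric, $\Sig_{1}^{\top}=\Sig_{1}$, and item (1) follows (with $\times$ read as the cross tensor product $\times_{c}$, as throughout this section). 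The same conclusion comes out of a one‑line entrywise verification, $\big[\frac{dA}{dT}\big]_{i_{1}i_{2}i_{3}i_{4}}=\partial(\imath\mu-\Sig_{1}T\Sig_{2})_{i_{3}i_{4}}/\partial T_{i_{1}i_{2}}=-\Sig_{1}(i_{3},i_{1})\Sig_{2}(i_{2},i_{4})$, again using $\Sig_{1}=\Sig_{1}^{\top}$.

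For item (2), the identity is the Leibniz (product) rule for the cross tensor product, with $A\times_{c}A$ regarded as a $4$‑order tensor‑valued function of $T$ through $A=A(T)$. The steps are: write a generic entry of the $6$‑order tensor $\frac{d(A\times_{c}A)}{dT}$, substitute the defining relation $(A\times_{c}A)_{i_{3}i_{4}i_{5}i_{6}}=A_{i_{3}i_{5}}A_{i_{4}i_{6}}$, and differentiate with the ordinary product rule,
\[
\frac{\partial\big(A_{i_{3}i_{5}}A_{i_{4}i_{6}}\big)}{\partial T_{i_{1}i_{2}}}=\Big[\tfrac{dA}{dT}\Big]_{i_{1}i_{2}i_{3}i_{5}}A_{i_{4}i_{6}}+A_{i_{3}i_{5}}\Big[\tfrac{dA}{dT}\Big]_{i_{1}i_{2}i_{4}i_{6}}.
\]
The first summand — differentiate the first copy of $A$, keep the second — reassembles into $\frac{dA}{dT}$ tensored with the surviving $A$ placed in the slots named by $\times_{(3,6)}$ in the statement; the second summand — differentiate the second copy, keep the first — is the entry of $\frac{dA}{dT}\times_{c}A$, with the surviving $A$ sitting in the positions it occupies inside $A\times_{c}A$. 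Summing gives item (2). Alternatively, granting the extension of Proposition \ref{prop:assoc4tensorprod} and of Theorem \ref{th: matrixderivat01}(3) from matrices to tensors that the paper has already announced, item (2) is just the instance of the general product rule for $\times_{c}$ in which both factors equal $A$ and each is differentiated in turn.

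There is no analytic difficulty anywhere — everything is polynomial in the entries of $T$ — so the one point that genuinely needs care, and the step I expect to be the real obstacle, is the index bookkeeping: confirming that the two terms delivered by the product rule occupy exactly the index slots encoded by $\times_{(3,6)}$ and $\times_{c}$ in the statement, and not some neighbouring placement. I would pin this down by writing out all six indices of $\frac{d(A\times_{c}A)}{dT}$ explicitly against the paper's definitions of $\times_{(s,t)}$ and $\times_{c}$, and reconcile the slot labels with those conventions.
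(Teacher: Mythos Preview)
Your proposal is correct and follows essentially the same route as the paper. For item (1) the paper carries out the bare entrywise computation you give as your ``one-line verification'' (it does not invoke Corollary~\ref{co: 03}(1), though of course that corollary was itself proved the same way); for item (2) the paper applies the ordinary product rule to a generic entry exactly as you do. The only real difference is the index layout: in this lemma the paper silently switches to grouping row indices together and column indices together (so that $\frac{dA}{dT}$ is indexed as $B_{i_{1}i_{2}j_{1}j_{2}}=\partial A_{i_{2}j_{2}}/\partial T_{i_{1}j_{1}}$, of size $n_{1}\times n_{1}\times n_{2}\times n_{2}$), which is why the answer comes out as $\Sigma_{1}\times\Sigma_{2}$ rather than the $\Sigma_{1}\times_{c}\Sigma_{2}$ you obtain under the Section~2 convention; your caveat about reading $\times$ here as $\times_{c}$ is the right diagnosis of that discrepancy.
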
    

\begin{proof} 
We write $\B=(B_{i_{1}i_{2}j_{1}j_{2}})= \frac{dA(T)}{dT}$ i.e., the derivative of $A(T)$ w.r.t. $T$, which, according to the definition, is of size 
$n_{1}\times n_{1}\times n_{2}\times n_{2}$. Then
\beq
B_{i_{1}i_{2}j_{1}j_{2}} =\frac{dA_{i_{2}j_{2}}}{dT_{i_{1}j_{1}}} = -\frac{d}{dT_{i_{1}j_{1}}}\sum\limits_{k,l} \si_{i_{2}k}^{(1)}\si_{l j_{2}}^{(2)} T_{kl}
                                     = - \sum\limits_{k,l} \si_{i_{2}k}^{(1)}\si_{l j_{2}}^{(2)} \delta_{i_{1}k} \delta_{j_{1}l} =- \si_{i_{2}i_{1}}^{(1)}\si_{j_{1} j_{2}}^{(2)}
\eeq
It follows that $\B=\frac{dA}{dT} = -\Sig_{1}\times \Sig_{2}$.\\
\indent  To prove (2), we first note that $A\times_{c} A\in \R^{n_{1}\times n_{1}\times n_{2}\times n_{2}}$.  Thus $\frac{d(A\times_{c} A)}{dT}$ is an 6-order tensor. Denote 
$\C=(C_{i_{1}i_{2}i_{3}j_{1}j_{2}j_{3}})= \frac{d(A\times_{c} A)}{dT}$. Then $\C$ is of size $n_{1}^{[3]}\times n_{2}^{[3]}\equiv n_{1}\times n_{1}\times n_{1}\times n_{2}\times n_{2}\times n_{2}$, and    
\beq  
C_{i_{1}i_{2}i_{3}j_{1}j_{2}j_{3}} = \frac{d[A_{i_{2}j_{2}}A_{i_{3}j_{3}}]}{dT_{i_{1}j_{1}}} =
 (\frac{dA}{dT})_{i_{1}i_{2}j_{1}j_{2}} A_{i_{3}j_{3}} + (\frac{dA}{dT})_{i_{1}i_{3}j_{1}j_{3}} A_{i_{2}j_{2}} 
\eeq 
for all possible $i_{k},j_{k}$, which completes the proof of (2).  
\end{proof}

\indent Let  $\bX\sim \normal_{n_{1},n_{2}}(\mu, \Sig_{1}, \Sig_{2})$ and let $\phi:=\phi_{X}(T)$ be its characteristic function. For our convenience, 
we denote $\phi^{\p}$ ($A^{\p}$) for the first order derivative of $\phi$ ($A(T)$) w.r.t. $T$, $\phi^{\p\p}$ ($A^{\p\p}$) for the second order derivative of $\phi$ ($A(T)$) 
w.r.t. $T$, and $\phi^{(k)}$ ($A^{(k)}$) for the $k$-order derivative of $\phi$ ($A(T)$)w.r.t. $T$.  By Lemma \ref{le: derA}, we can characterize the derivatives of the 
characteristic function of a Gaussian matrix $X$, as in the following: 
\begin{thm}\label{th: phi4nm}
Let  $\bX\sim \normal_{n_{1},n_{2}}(\mu, \Sig_{1}, \Sig_{2})$, $\phi:=\phi_{X}(T)$ be its characteristic function and $A=A(T)$ be defined as above.  Then 
\begin{description}
  \item[(1)]   $\phi^{\p}  = \phi (\imath\mu -\Sig_{1} T \Sig_{2}) =\phi A$. 
  \item[(2)]   $\phi^{\p\p} = -\phi \left[ \mu^{[2]} +\Sig_{1}\times \Sig_{2} -\Sig_{1}^{[2]}T^{[2]}\Sig_{2}^{[2]} + 
                   \imath (I_{n_{1}}\times \Sig_{1})(\mu\times T+T\times \mu)(I_{n_{2}}\times \Sig_{2}) \right]$ 
  \item[(3)]   $\phi^{(3)} = A\times_{(3,6)} \phi^{\p\p} + \phi (A\times A)^{\p}$.
  \item[(4)]   $\phi^{(k+1)} = \sum\limits_{i+j=k} A^{(i)}\times_{(3,6)}\phi^{(j)} + \sum\limits_{i+j=k-1} \phi^{(i)}\times_{(2,5)} A^{(j)}$.
\end{description}
\end{thm}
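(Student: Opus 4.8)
The plan is to derive everything from the closed form of the characteristic function supplied by Corollary~\ref{co: matrxdensitychar}, namely $\phi=\phi_X(T)=\exp\{\imath\tr(T^{\top}\mu)-\frac{1}{2}\tr(T^{\top}\Sig_1 T\Sig_2)\}$, and then to differentiate it repeatedly with respect to the matrix variable $T$ using the tensor product/chain rules established above. Statement (1) is the only one that needs the exponential directly: since $\phi$ is scalar-valued, $\frac{d\phi}{dT}=\phi\cdot\frac{d}{dT}\big(\imath\tr(T^{\top}\mu)-\frac{1}{2}\tr(T^{\top}\Sig_1 T\Sig_2)\big)$, and the two elementary trace derivatives $\frac{d}{dT}\tr(T^{\top}\mu)=\mu$ and $\frac{d}{dT}\tr(T^{\top}\Sig_1 T\Sig_2)=2\Sig_1 T\Sig_2$ (the factor $2$ coming from the symmetry of $\Sig_1,\Sig_2$) give at once $\phi^{\p}=\phi(\imath\mu-\Sig_1 T\Sig_2)=\phi A$.

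For (2) I would differentiate the identity $\phi^{\p}=\phi A$ a second time. Applying the product rule for a scalar times a matrix in its tensor form gives $\phi^{\p\p}=\frac{d\phi}{dT}\times A+\phi\,\frac{dA}{dT}$; substituting $\frac{d\phi}{dT}=\phi A$ from (1) and $\frac{dA}{dT}=-\Sig_1\times\Sig_2$ from Lemma~\ref{le: derA}(1) collapses this to $\phi^{\p\p}=\phi\,(A\times A-\Sig_1\times\Sig_2)$. What remains is purely algebraic: insert $A=\imath\mu-\Sig_1 T\Sig_2$, expand $A\times A$ into its four terms, and reassemble them using the multi-factor extension of Proposition~\ref{prop:assoc4tensorprod}(4) — $\mu\times\mu=\mu^{[2]}$, $(\Sig_1 T\Sig_2)\times(\Sig_1 T\Sig_2)=\Sig_1^{[2]}T^{[2]}\Sig_2^{[2]}$, and the two mixed terms $\mu\times(\Sig_1 T\Sig_2)+(\Sig_1 T\Sig_2)\times\mu=(I_{n_1}\times\Sig_1)(\mu\times T+T\times\mu)(I_{n_2}\times\Sig_2)$ — after which collecting the signs yields the displayed formula.

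For (3) I would differentiate the form just obtained, $\phi^{\p\p}=\phi\,C$ with $C=A\times A-\Sig_1\times\Sig_2$. The product rule again gives $\phi^{(3)}=\frac{d\phi}{dT}\times_{(3,6)}C+\phi\,\frac{dC}{dT}$; the tensor $\Sig_1\times\Sig_2$ is constant in $T$, so $\frac{dC}{dT}=(A\times A)^{\p}$, which is exactly what Lemma~\ref{le: derA}(2) evaluates, while $\frac{d\phi}{dT}\times_{(3,6)}C=\phi\,A\times_{(3,6)}C=A\times_{(3,6)}(\phi C)=A\times_{(3,6)}\phi^{\p\p}$ by (1); this is the asserted identity. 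Statement (4) is then the inductive promotion of this pattern: assuming $\phi^{(k)}$ has been brought to the stated Leibniz-type form, one differentiates it summand by summand; each application of the product rule splits a term into one in which the exponential factor is differentiated — where $\phi^{\p}=\phi A$ reintroduces a factor $A$ in a new slot — and one in which an $A^{(i)}$-factor is differentiated into $A^{(i+1)}$, and since $A^{(i)}\equiv 0$ for all $i\ge 2$ (only $A$ itself and the constant $A^{\p}=-\Sig_1\times\Sig_2$ survive), the resulting terms collapse to exactly the two sums over $i+j=k$ and $i+j=k-1$.

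The trace derivatives in (1), the repeated product rule, and the algebraic regrouping in (2) are routine. The one place I expect to spend real care — and the genuine obstacle — is the index bookkeeping: tracking which pair of mode-slots the freshly created $T$-indices occupy at each differentiation (the content of the subscripts $\times_{(3,6)}$ and $\times_{(2,5)}$), making sure this is consistent with the index conventions used for $\frac{dA}{dT}$ in Lemma~\ref{le: derA} and for the tensor product in Proposition~\ref{prop:assoc4tensorprod}, and verifying that the induction in (4) does not silently permute these slots so that the two sums genuinely close up as written.
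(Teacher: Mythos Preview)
Your proposal is correct and follows essentially the same route as the paper: derive (1) from the explicit characteristic function via the two trace derivatives (the paper computes $\frac{d}{dT}\tr(\Sig_1 T\Sig_2 T^{\top})=2\Sig_1 T\Sig_2$ entrywise, exactly as you indicate), then obtain (2) by applying the product rule to $\phi^{\p}=\phi A$ together with Lemma~\ref{le: derA}(1) and the factorization from Proposition~\ref{prop:assoc4tensorprod}(4), and finally get (3)--(4) by iterating the same product rule and invoking induction. Your observation that $A^{(i)}=0$ for $i\ge 2$ and your flagging of the mode-slot bookkeeping as the only genuine subtlety are both apt; the paper itself treats (3) and (4) in a single sentence and is no more explicit about the $\times_{(3,6)}$ versus $\times_{(2,5)}$ placements than you are.
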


\begin{proof}
In order to prove (1), we denote $ f=\tr(\Sig_{1}T\Sig_{2}T^{\p})$. Then $\frac{df}{dT}\in \R^{n_{1}\times n_{2}}$.  Since 
\[ \tr(\Sig_{1}T\Sig_{2}T^{\p}) =(\Sig_{1}T, (T\Sig_{2})^{\p})=\sum\limits_{i,j,k,l} \si^{(1)}_{ij}\si^{(2)}_{lk}t_{jk}t_{il}, \]
It follows that for any pair $(u,v)$ where $u\in [n_{1}], v\in [n_{2}]$, we have 
\beyy
(df/dT)_{uv} = df/dt_{uv} &=& \sum\limits_{i,j,k,l} \si^{(1)}_{ij}\si^{(2)}_{lk}(\delta_{ju}\delta_{kv}t_{il} + \delta_{iu}\delta_{lv}t_{jk} ) \\
            &=& \sum\limits_{i,l} \si^{(1)}_{iu}\si^{(2)}_{lv}t_{il} + \sum\limits_{j,k} \si^{(1)}_{uj}\si^{(2)}_{vk}t_{jk}  \\
            &=& 2(\Sig_{1}T\Sig_{2})_{uv}
\eeyy
Thus we have 
\beq\label{eq: trderiva}
\frac{d(\tr(\Sig_{1}T\Sig_{2}T^{\p}))}{dT} = 2\Sig_{1}T\Sig_{2}
\eeq
It follows that $\phi^{\p}(T) := \frac{d\phi}{dT} = \phi (\imath\mu -\Sig_{1} T \Sig_{2})$ due to (\ref{eq:cf4X}).  Thus (1) is proved. \\
\indent  To prove (2), we denote $A:=A(T)=\imath\mu -\Sig_{1} T \Sig_{2}$ as in Lemma \ref{le: derA}.  Then again by Lemma \ref{le: derA} we get  
\beyy 
    \frac{d^{2}\phi}{dT^{2}} &=& \frac{d}{dT} (\phi^{\p}) = \frac{d}{dT} (\phi A) = \frac{d\phi}{dT}\times_{(2,4)} A + \phi (\frac{dA}{dT})\\
                                         &=& -\phi [\mu\times_{(2,4)}\mu+\Sig_{1}\times_{(2,4)}\Sig_{2} -U\times_{(2,4)} U+\imath (\mu\times_{(2,4)}U + U\times_{(2,4)}\mu)]
\eeyy
where $U=\Sig_{1}T\Sig_{2}$.  By (4) of Lemma \ref{prop:assoc4tensorprod}, $U\times U=(\Sig_{1}\times \Sig_{1})(T\times T)(\Sig_{1}\times \Sig_{1})$ and thus (2)
holds. \par
\indent   Now (3) can be verified by using Lemma \ref{le: derA}, and (4) is also immediate if we use the induction approach to take care of it. 
\end{proof}

\begin{cor}\label{co: m4gvec}
Let  $\bX\sim \normal_{n_{1},n_{2}}(0, \Sig_{1}, \Sig_{2})$.  Then 
\begin{description}
  \item[(1) ]   $m_{2}[\bX] =\Sig_{1}\times\Sig_{2}$ . 
  \item[(2) ]   $m_{k}[\bX] = 0$ for all odd $k$.
  \item[(3) ]   $m_{4}[\bX] = \Sig_{1}\times_{(2,4)} \Sig_{2}\times_{(5,7)} \Sig_{1}\times_{(6,8)} \Sig_{2} +\Sig_{1}\times_{(2,4)} \Sig_{2}\times_{(3,7)} \Sig_{1}\times_{(6,8)} \Sig_{2}+\Sig_{1}\times_{(2,6)} \Sig_{2}\times_{(5,7)} \Sig_{1}\times_{(4,8)} \Sig_{2} $
\end{description}
\end{cor}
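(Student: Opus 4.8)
The plan is to reduce all three assertions to evaluating derivatives of the characteristic function $\phi:=\phi_{\bX}(T)$ at $T=0$. By the matrix version of Lemma~\ref{le: equivdef4moment} (proved exactly as there, by differentiating $\phi_{\bX}(T)=E[\exp(\imath\seq{\bX,T})]$ under the expectation), one has $m_k[\bX]=\imath^{-k}\,\phi^{(k)}(T)\big|_{T=0}$ for every $k\ge 1$, so it is enough to read off $\phi^{\p}(0)$, $\phi^{\p\p}(0)$ and $\phi^{(4)}(0)$ from Theorem~\ref{th: phi4nm}. Throughout we use $\mu=0$, so that $A(T)=\imath\mu-\Sig_1 T\Sig_2=-\Sig_1 T\Sig_2$, hence $A(0)=0$, while $\phi(0)=1$.

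Parts (1) and the vanishing of $m_1$ are then immediate. By Theorem~\ref{th: phi4nm}(1), $\phi^{\p}(0)=\phi(0)A(0)=0$, so $m_1[\bX]=0$. Setting $\mu=0$ and $T=0$ in Theorem~\ref{th: phi4nm}(2) annihilates every $\mu$-term together with the term $\Sig_1^{[2]}T^{[2]}\Sig_2^{[2]}$, leaving $\phi^{\p\p}(0)=-\Sig_1\times\Sig_2$; hence $m_2[\bX]=\imath^{-2}\phi^{\p\p}(0)=\Sig_1\times\Sig_2$, which is (1). (Equivalently, $\cov(X_{ab},X_{cd})=(\Sig_1)_{ac}(\Sig_2)_{bd}$ can be read off directly from $\vecc(\bX)\sim\N_{n_1n_2}(0,\Sig_2\otimes\Sig_1)$ in~(\ref{eq: vec2nm}).) For (2) the quickest route is symmetry: since $\mu=0$, the exponent $-\frac{1}{2}\tr(T^{\top}\Sig_1 T\Sig_2)$ in~(\ref{eq:cf4X3}) is even in $T$, so $\phi_{\bX}(-T)=\phi_{\bX}(T)$; therefore every odd-order derivative of $\phi$ vanishes at $0$ and $m_k[\bX]=\imath^{-k}\phi^{(k)}(0)=0$ for odd $k$. (Alternatively, Lemma~\ref{le: gdtran} with $B_1=-I_{n_1}$, $B_2=I_{n_2}$, $C=0$ gives $-\bX\sim\N_{n_1,n_2}(0,\Sig_1,\Sig_2)$, so $m_k[\bX]=m_k[-\bX]=(-1)^k m_k[\bX]$.) In particular $m_3[\bX]=0$, consistent with Theorem~\ref{th: phi4nm}(3) at $T=0$.

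For (3) I would compute $\phi^{(4)}(0)=\frac{d^2}{dT^2}\phi^{\p\p}(T)\big|_{T=0}$ by differentiating the closed form of Theorem~\ref{th: phi4nm}(2) once more. With $U(T):=\Sig_1 T\Sig_2$ that form reads $\phi^{\p\p}(T)=-\phi(T)\big[\Sig_1\times\Sig_2-U^{[2]}(T)\big]$, where $U^{[2]}=\Sig_1^{[2]}T^{[2]}\Sig_2^{[2]}$ is the $4$-tensor quadratic in $T$. Differentiating twice more by the Leibniz rule and evaluating at $T=0$, I discard every term carrying a factor $A(0)=0$, $\phi^{\p}(0)=0$, $U(0)=0$, or $(U^{[2]})^{\p}(0)=0$. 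In the bracket $-\phi\,(\Sig_1\times\Sig_2)$ the tensor $\Sig_1\times\Sig_2$ is constant, so only the contribution with both differentiations falling on $\phi$ survives, namely $-(\Sig_1\times\Sig_2)$ tensored (with the appropriate mode labels) with $\phi^{\p\p}(0)=-\Sig_1\times\Sig_2$, a single $8$-tensor built from $\Sig_1,\Sig_2,\Sig_1,\Sig_2$; in the bracket $\phi\,U^{[2]}$, of the Leibniz contributions $\phi^{\p\p}U^{[2]}$, $\phi^{\p}(U^{[2]})^{\p}$ and $\phi\,(U^{[2]})^{\p\p}$ only $\phi(0)(U^{[2]})^{\p\p}(0)=(U^{[2]})^{\p\p}(0)$ survives, and since $U$ is linear this constant second derivative equals, by the Leibniz rule applied to $U\times U$ (the $U^{\p\p}=0$ terms dropping out), the sum of the two distinct cross-pairings $U^{\p}\times U^{\p}$ with $U^{\p}=\Sig_1\times\Sig_2$ (Lemma~\ref{le: derA}). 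Thus $\phi^{(4)}(0)$ is a sum of exactly three $8$-tensors, each a fourfold product of $\Sig_1$'s and $\Sig_2$'s indexed by one of the three perfect matchings of $\{1,2,3,4\}$, i.e.\ the Isserlis/Wick pattern $E[X_{i_1j_1}X_{i_2j_2}X_{i_3j_3}X_{i_4j_4}]=(\Sig_1)_{i_1i_2}(\Sig_2)_{j_1j_2}(\Sig_1)_{i_3i_4}(\Sig_2)_{j_3j_4}+\cdots$; this is also how one could derive (3) directly, Isserlis being available since $\vecc(\bX)$ is Gaussian. As $m_4[\bX]=\imath^{-4}\phi^{(4)}(0)=\phi^{(4)}(0)$, it remains only to write each of these three tensors in the mode-labelled form of~(3).

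The genuinely fiddly step is this last one: tracking which fresh differentiation indices land in which slot, so that the three tensors emerging from the Leibniz expansion coincide term by term with $\Sig_1\times_{(2,4)}\Sig_2\times_{(5,7)}\Sig_1\times_{(6,8)}\Sig_2$ and its two partners in~(3). This is purely combinatorial bookkeeping and, in practice, is cleanest to settle by computing the scalar entry $E[X_{i_1j_1}X_{i_2j_2}X_{i_3j_3}X_{i_4j_4}]$ by Isserlis and matching the resulting three factor patterns of $\Sig_1$ and $\Sig_2$ against~(3), rather than by manipulating the tensor identities symbolically.
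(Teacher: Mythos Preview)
Your proposal is correct. Parts (1) and (3) align with the paper: both read off $m_2[\bX]$ from Theorem~\ref{th: phi4nm}(2) at $T=0$, and for the fourth moment the paper simply invokes Theorem~\ref{th: phi4nm}(4) or defers to Theorem~2.2.7(iv) of \cite{KR2005}, whereas you carry out the Leibniz expansion and identify the three Isserlis pairings explicitly, which is more informative. The genuine divergence is in part~(2): the paper argues by induction on odd $k$, using the recursion $\phi^{(k+1)}=\sum A^{(i)}\times_{(3,6)}\phi^{(j)}+\cdots$ from Theorem~\ref{th: phi4nm}(4) together with $A(0)=0$ and the inductive hypothesis $\phi^{(k)}(0)=0$; you instead observe that with $\mu=0$ the characteristic function in~(\ref{eq:cf4X3}) is even in $T$, so all odd derivatives vanish at the origin (equivalently, $-\bX\stackrel{d}{=}\bX$). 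Your symmetry argument is shorter and conceptually cleaner; the paper's inductive route has the advantage of exercising the derivative machinery of Theorem~\ref{th: phi4nm}, but is otherwise more laborious.
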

\begin{proof}
(1).  By definition we have 
\[ m_{2}[\bX] =\frac{1}{\imath^{2}} \phi^{\p\p}(T) _{T=0} = -\phi^{\p\p}(T) _{T=0}. \]
The result is followed by (2) of Theorem \ref{th: phi4nm}.\par
(2).  It is obvious that  $m_{1}[\bX] = 0$.  By the hypothesis, we have $A(0)=0, A^{\p}(0)=-\Sig_{1}\times \Sig_{2}$.  Thus we have 
\[ (A\times A)^{\p}\mid_{T=0} = A^{\p}(0)\times_{(3,6)} A(0) + A^{\p}(0)\times_{c} A(0) =0, \]
We now use the induction to $k$ to prove (2).  By Theorem \ref{th: phi4nm}, we have   
\beyy 
 m_{3}[\bX]  &=& \frac{1}{\imath^{3}} \phi^{(3)}(T) _{T=0} \\
                    &=& -\imath \left[ A(0)\times_{(3,6)} \phi^{\p\p} (0) +\phi (A\times A)^{\p}\right]|_{T=0}\\
                    &=& -\imath A(0)\times_{(3,6)} \phi^{\p\p} (0) =0   
\eeyy
Now we assume the result holds for an odd number $k$. Then by (4) of Theorem \ref{th: phi4nm}, we have
\beyy 
m_{k+2}  &=& \frac{1}{\imath^{k+2}} \phi^{(k+2)}\mid_{T=0} \\
               &=& \frac{1}{\imath^{k+2}} \left( A\times_{(3,6)}\phi^{(k+1)} + A^{\p}\times_{(3,6)}\phi^{(k)} + A\times_{(3,6)}\phi^{(k+1)} \right)\mid_{T=0}\\
               &=& \frac{1}{\imath^{k+2}} \left( A(0)\times_{(3,6)}\phi^{(k+1)}(0) + A^{\p}(0)\times_{(3,6)}\phi^{(k)}(0) + A(0)\times_{(3,6)}\phi^{(k+1)}(0)\right) \\
               &=& 0
\eeyy
since $A(0)=0$ and $\phi^{(k)}(0)$ by the hypothesis. Thus the result is proved. \par 
(3).  This can be shown by using (4) of Theorem \ref{th: phi4nm}. But we can also prove it by comparing the item (iv) in Theorem 2.2.7 (Page 203) in \cite{KR2005}. 
\end{proof}

\section{Random tensors with Gaussian distributions}
\setcounter{equation}{0} 
  
\indent We start this section by considering a 3-order random tensor $\Z\in  \R^{n_{1}\times n_{2}\times n_{3}}$.  The unfolding of $\Z$ along mode-$k$ for any $k\in [3]$ 
is a random matrix $Z[k]\in \R^{n_{k}\times n_{i}n_{j}}$ where $\set{i,j,k}=[3]$. A Gaussian tensor is a random tensor with a Gaussian distribution.  For our convenience, we 
use $Z(:, j, k)$ to denote the $(j,k)$-fibre of $\Z$  (notation borrowed from MATLAB) given $j\in [n_{2}], k\in [n_{3}]$, and  use $A(i,:,k)$ and $A(i,j,:)$ for the similar cases. 
Now we state an equivalent definition for a 3-order SND tensor.       
\begin{definition}\label{def: sndtensor}
A random tensor $\Z\in  \R^{n_{1}\times n_{2}\times n_{3}}$ is said to follow a \emph{standard normal distribution}  (\emph{SND}), denoted
$\Z\sim \normal(0, I_{n_{1}},I_{n_{2}},I_{n_{3}})$, if the following three conditions hold 
\begin{description}
  \item[(1) ]  For all $j\in [n_{2}], k\in [n_{3}]$, $Z(:,j,k)\in \R^{n_{1}}$'s are i.i.d. with $\normal_{n_{1}}(0, I_{n_{1}})$.    
  \item[(2) ]  For all $i\in [n_{1}], k\in [n_{3}]$, $Z(i,:,k)\in \R^{n_{2}}$'s are i.i.d. with $\normal_{n_{2}}(0, I_{n_{2}})$. 
  \item[(3) ]  For all $i\in [n_{1}], j\in  [n_{2}]$, $Z(i,j,:)\in \R^{n_{3}}$'s are i.i.d. with $\normal_{n_{3}}(0, I_{n_{3}})$. 
\end{description} 
\end{definition}  
\indent Similar to the random matrix case, we have 
\begin{thm}\label{th: cond4sndt}
Let $\Z = (Z_{ijk}) \in \R^{n_{1}\times n_{2}\times n_{3}}$ be a random tensor, $n=n_{1}n_{2}n_{3}$ and $m_{l}=n/n_{l}$ for $l=1,2,3$.  The following items are equivalent:  
 \begin{description}
  \item[(1) ]  $\Z \sim \normal (0, I_{n_{1}}, I_{n_{2}}, I_{n_{3}})$.
  \item[(2) ]  $Z[k]\sim \normal (0, I_{n_{k}}, I_{m_{k}})$ for all $k = 1, 2, 3$.
  \item[(3) ]  $\vecc(\Z)\sim \normal_{n}(0, I_{n})$. 
  \item[(4) ]  All the $Z_{ijk}$'s are i.i.d. with $Z_{ijk}\sim \normal(0,1)$. 
  \item[(5) ]  $\Z \times_{1}\al^{(1)}\times_{2}\al^{(2)} \times_{3}\al^{(3)}\sim \normal (0, 1), \forall  \al^{(i)} \in S^{n_{i}-1}, i=1,2,3$.
\end{description} 
\end{thm}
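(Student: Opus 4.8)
The plan is to run the cycle of implications
\[ (1)\ \Rightarrow\ (4)\ \Rightarrow\ (5)\ \Rightarrow\ (2)\ \Rightarrow\ (3)\ \Rightarrow\ (1). \]
Three of these links are routine and I would dispose of them first. For $(1)\Rightarrow(4)$: condition (1) of Definition \ref{def: sndtensor} states that the mode-$1$ fibres $A(:,j,k)$ form a family of i.i.d.\ $\N_{n_{1}}(0,I_{n_{1}})$ vectors; since $\N_{n_{1}}(0,I_{n_{1}})$ has independent $\N(0,1)$ coordinates and the index sets $\set{(i,j,k):i\in[n_{1}]}$ over all $(j,k)$ partition the entries of $\X$, all $X_{ijk}$ are mutually independent $\N(0,1)$. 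For $(3)\Rightarrow(1)$: if $\vecc(\X)\sim\N_{n}(0,I_{n})$ then all entries are i.i.d.\ $\N(0,1)$, and regrouping them into fibres along each mode yields the three conditions of Definition \ref{def: sndtensor}. For $(4)\Rightarrow(5)$: $(4)$ is equivalent to $\vecc(\X)\sim\N_{n}(0,I_{n})$, so by Lemma \ref{le: vecsnd} $\ga^{\top}\vecc(\X)\sim\N(0,1)$ for every $\ga\in S^{n-1}$; taking $\ga=\al^{(1)}\otimes\al^{(2)}\otimes\al^{(3)}$, which lies in $S^{n-1}$ because $\norm{\al^{(1)}\otimes\al^{(2)}\otimes\al^{(3)}}=\prod_{i=1}^{3}\norm{\al^{(i)}}=1$, and noting $\ga^{\top}\vecc(\X)=\X\times_{1}\al^{(1)}\times_{2}\al^{(2)}\times_{3}\al^{(3)}$ gives (5).

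The substantive step is $(5)\Rightarrow(2)$, which I would carry out by contracting one mode at a time and invoking the matrix result Lemma \ref{le: cond4sndm}. Fix $\al^{(1)}\in S^{n_{1}-1}$ and put $Y:=\X\times_{1}\al^{(1)}\in\R^{n_{2}\times n_{3}}$, a random matrix. For every $\al^{(2)}\in S^{n_{2}-1}$, $\al^{(3)}\in S^{n_{3}-1}$ we have $\al^{(2)\top}Y\al^{(3)}=\X\times_{1}\al^{(1)}\times_{2}\al^{(2)}\times_{3}\al^{(3)}\sim\N(0,1)$ by (5), so $Y\sim\N_{n_{2},n_{3}}(0,I_{n_{2}},I_{n_{3}})$ by Lemma \ref{le: cond4sndm}, and hence $\vecc(Y)\sim\N_{n_{2}n_{3}}(0,I_{n_{2}n_{3}})$. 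Writing $X[1]\in\R^{n_{1}\times n_{2}n_{3}}$ for the mode-$1$ unfolding of $\X$ (columns indexed by the pairs $(j,k)$), one checks $\vecc(Y)=\al^{(1)\top}X[1]$, so $\al^{(1)\top}X[1]\sim\N_{n_{2}n_{3}}(0,I_{n_{2}n_{3}})$ for every unit $\al^{(1)}$. Then for any $\al\in S^{n_{1}-1}$ and any $\be\in S^{n_{2}n_{3}-1}$, the scalar $\al^{\top}X[1]\be$ is a linear combination of the coordinates of the standard normal vector $\al^{\top}X[1]$, whence $\al^{\top}X[1]\be\sim\N(0,1)$ by Lemma \ref{le: vecsnd}; a second application of Lemma \ref{le: cond4sndm} gives $X[1]\sim\N_{n_{1},m_{1}}(0,I_{n_{1}},I_{m_{1}})$. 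The construction is symmetric in the three modes, so $X[l]\sim\N(0,I_{n_{l}},I_{m_{l}})$ for each $l=1,2,3$, which is (2). Finally, $(2)\Rightarrow(3)$: applying Lemma \ref{le: cond4sndm} to the SND matrix $X[1]$ gives $\vecc(X[1])\sim\N_{n}(0,I_{n})$, and $\vecc(X[1])$ is a fixed coordinate permutation of $\vecc(\X)$, so $\vecc(\X)\sim\N_{n}(0,I_{n})$.

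I expect the one place that needs genuine care to be $(5)\Rightarrow(2)$: condition (5) only pins down the one-dimensional marginals of $\X$ along \emph{rank-one} contraction directions $\al^{(1)}\otimes\al^{(2)}\otimes\al^{(3)}$, whereas standard normality of an unfolding requires control along \emph{all} directions. The remedy is the device above --- contract a single mode to reach the genuine matrix setting, upgrade ``all rank-one directions are $\N(0,1)$'' to ``this matrix is SND'' by the already-established Lemma \ref{le: cond4sndm}, then feed that stronger conclusion back into the unfolding. The remaining points are purely bookkeeping: the identifications among $\vecc(\X)$, $\vecc(Y)$ and the unfoldings $X[l]$ (each is obtained from the others by a fixed permutation of coordinates, under which $\N_{n}(0,I_{n})$ is invariant), and checking the orientation of the Kronecker factors so that $\ga=\al^{(1)}\otimes\al^{(2)}\otimes\al^{(3)}$ really does represent the contraction $\X\times_{1}\al^{(1)}\times_{2}\al^{(2)}\times_{3}\al^{(3)}$; neither poses a real obstacle.
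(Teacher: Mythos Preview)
Your proof is correct and follows essentially the same approach as the paper: both reduce the key step $(5)\Rightarrow(2)$ to the matrix case by contracting one mode and invoking Lemma \ref{le: cond4sndm}, with the remaining implications handled by routine vectorization and unfolding bookkeeping. Your cycle $(1)\Rightarrow(4)\Rightarrow(5)\Rightarrow(2)\Rightarrow(3)\Rightarrow(1)$ is arranged slightly differently from the paper's, and your $(5)\Rightarrow(2)$ argument is in fact more fully completed --- you feed the conclusion $\vecc(\X\times_{1}\al^{(1)})\sim\N_{m_{1}}(0,I_{m_{1}})$ back into a second application of Lemma \ref{le: cond4sndm} to obtain $X[1]\sim\N(0,I_{n_{1}},I_{m_{1}})$ directly, whereas the paper stops after observing that each slice $X(i,:,:)$ is SND --- but the core device is identical.
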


\begin{proof}
We first show that $(1)\implies (2)$. It suffices to show that $Z[1]\sim \normal_{n_{1},m_{1}}(0, I_{n_{1}}, I_{m_{1}})$ since $X[1]\in \R^{n_{1}\times m_{1}}$.  
By definition of a SND tensor, we know that $Z(:,j,k)\sim \normal_{n_{1}}(0, I_{n_{1}})$ for all $j,k$. On the other hand, for any $i\in [n_{1}]$, we have 
$\vecc(Z(i,:,:))\sim \normal_{m_{1}}(0, I_{m_{1}})$ by $Z(i,:,:)\sim \normal_{n_{2}, n_{3}}(0, I_{n_{2}}, I_{n_{3}})$ and Lemma \ref{le: cond4sndm}.  Thus 
$Z[1](i,:)=(\vecc(Z(i,:,:)))^{\top}\sim \normal_{m_{1}}(0, I_{m_{1}})$. The implication $(2)\implies (1)$ is obvious. \\   
\indent The equivalence $(2)\ifff (3)$ is directly from Lemma \ref{le: cond4sndm} and  $(3)\ifff (4)$ is obvious. We now prove $(3)\implies (5)$. Let 
$\al^{(l)}\in S^{n_{l}-1}$ for $l=1,2,3$ and let $\be=\al^{(2)}\otimes \al^{(3)}$. Then we have $\be\in S^{m_{1}-1}$. Furthermore, we have 
\[ \Z\times_{1}\al^{(1)}\times_{2}\al^{(2)} \times_{3}\al^{(3)} =  (\al^{(1)})^{\top}Z[1]\be  \] 
which is SND by Lemma \ref{le: cond4sndm}.  Thus we have $\Z\times_{1}\al^{(1)}\times_{2}\al^{(2)} \times_{3}\al^{(3)} \sim \normal (0, 1)$ for all 
$\al^{(l)} \in S^{n_{l}-1}$ ($l=1,2,3$). Thus (5) holds. \\
\indent  Conversely, we let (5) hold and want to show (2). For any given $\al\in S^{n_{1}-1}$, we denote $A(\al)=\Z \times_{1}\al$. Then 
$A(\al)\in \R^{n_{2}\times n_{3}}$. Furthermore, for any $ \al^{(2)} \in S^{n_{2}-1}, \al^{(3)} \in S^{n_{3}-1}$, we have 
\[ \Z\times_{1}\al^{(1)}\times_{2}\al^{(2)} \times_{3}\al^{(3)} = (\al^{(2)})^{\top} A(\al) \al^{(3)}\in \normal(0,1)   \]
It follows by Lemma \ref{le: cond4sndm} that $A(\al)\in \normal_{n_{2},n_{3}}(0, I_{n_{2}}, I_{n_{3}})$ for every $\al\in S^{n_{1}-1}$.  Specifically, if we take 
$\al=e_{i}\in \R^{n_{1}}$ to be the $i$th coordinate vector in $\R^{n_{1}}$, then we have $A(\al)=Z(i,:,:)$ ($i\in [n_{1}]$).  Hence we have   
\[ Z(i,:,:) \in \normal_{n_{2},n_{3}}(0, I_{n_{2}}, I_{n_{3}}), \forall i\in [n_{1}]. \]
This shows that all the slices of $\Z$ along the mode-1 is a SND matrix. We can also show that all the slices (along the other two directions) are SND matrices by 
employing the same technique. This complete the proof that all the five items are equivalent.  
\end{proof} 

\indent  From Definition \ref{def: sndtensor}, we can see that a hypercubic random tensor $\Z\in \T_{3;n}$ ($\T_{3;n}:=\R^{n\times n\times n}$) is SND if 
$\Z\bx^{3}\sim \normal(0,1)$ for any unit vector $\bx\in \R^{n}$. It is easy to see from Definition \ref{def: sndtensor} that 

\begin{cor}\label{le: sndt}
Let $\Z=(Z_{ijk}) \in \R^{n_{1}\times n_{2}\times n_{3}}$ be a random tensor.  Then $\Z$ is a SND tensor if and only if all $Z_{ijk}$'s are  $iid$ with $Z_{ijk}\sim \normal(0,1)$.  
\end{cor} 

\indent  Similar to the matrix case, we can also define a general Gaussian tensor.  Denote $N:=n_{1}n_{2}\ldots n_{m}, m_{k}:=N/n_{k}$..  
Let $\M\in \R^{n_{1}\times \ldots \times n_{m}}$ be a constant tensor and $\Sig_{k}\in \R^{n_{k}\times n_{k}}$ be a positive semidefinite matrix 
for each $k\in [m]$.  For any random tensor $\Z\in \R^{n_{1}\times \ldots \times n_{m}}$, we denote $Z^{(k)}(:,j)$ for the $j$th column (fiber) vector of 
$Z^{(k)}$ where $Z[k]$ is the flattened matrix of $\Z$ along mode $k$.  $\Z$ is called a Gaussian tensor with parameters $(\M, \Sig_{1},\ldots, \Sig_{m})$ if 
$Z^{(k)}(:,1), Z^{(k)}(:,2), \ldots, Z^{(k)}(:,m_{k})$ are independent with 
\beq\label{eq:defguasst}
 Z^{(k)}(:, j)\sim \N_{n_{k},m_{k}}(M^{(k)}(:, j), \la_{k}\Sig_{k}), \quad  \forall k\in [m]. 
\eeq
 
\indent We note that the above definition for a general Gaussian tensor reduces to a multivariate Gaussian distribution when $m=1$ and to a Gaussian matrix when $m=2$. 
Obviously a tensor $\Z\in \T_{m,n}$ is a Gaussian tensor if $\Z\bx^{m}$ follows a Gaussian distribution for every nonzero vector $\bx\in \R^{n}$. \par
\indent The following theorem tells that each flattened matrix of an 3-order SND tensor along any direction is a SND matrix. \par 

\begin{thm}\label{th: Gtflattern}
Let $\Z \in  \R^{n_{1}\times \ldots \times n_{m}}$ be a Gaussian tensor with $\textrm{I}:=n_{1}\times \ldots \times n_{m}$. Then $Z[k]\in \R^{n_{k}\times N_{k}}$ is a 
Gaussian matrix for each $k\in [m]$, where $Z[k]$ is the flattened matrix of $\Z$ along the $k$-mode. 
\end{thm}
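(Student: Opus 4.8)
The plan is to verify, for an arbitrary fixed $k\in[m]$, that the mode-$k$ flattening $A[k]\in\R^{n_{k}\times m_{k}}$ (here $m_{k}=N_{k}=\prod_{i\neq k}n_{i}$) satisfies the two defining conditions (\ref{eq: defnm01}) and (\ref{eq: defnm02}) of a Gaussian matrix, so that $A[k]\sim\N_{n_{k},m_{k}}(M[k],\Sig_{k},\Psi_{k})$ for a suitable second parameter $\Psi_{k}$, where $M[k]$ is the mode-$k$ flattening of the mean tensor $\M$. Write $B:=A[k]$, with $j$th column $B_{\cdot j}=A^{(k)}(:,j)$. The column condition (\ref{eq: defnm02}) is essentially the hypothesis: by the definition of a Gaussian tensor the columns $B_{\cdot 1},\ldots,B_{\cdot m_{k}}$ are mutually independent and $B_{\cdot j}\sim\N_{n_{k}}\!\bigl(M[k]_{\cdot j},\,\lambda_{k,j}\Sig_{k}\bigr)$ for each $j$, where $\lambda_{k,j}$ is the scalar attached to column $j$ in (\ref{eq:defguasst}) (the $\lambda_{k}$ there, allowed to depend on $j$ exactly as $\si^{(2)}_{jj}$ does in the matrix case $m=2$). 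Setting $\Psi_{k}:=\diag(\lambda_{k,1},\ldots,\lambda_{k,m_{k}})$, this reads $B_{\cdot j}\sim\N_{n_{k}}\!\bigl(M[k]_{\cdot j},(\Psi_{k})_{jj}\Sig_{k}\bigr)$, which is exactly (\ref{eq: defnm02}) for $B$ with parameters $(M[k],\Sig_{k},\Psi_{k})$.

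It remains to establish the row condition (\ref{eq: defnm01}), and here the independence of the columns does the work. The $i$th row of $B$ is $B_{i\cdot}=\bigl((B_{\cdot 1})_{i},(B_{\cdot 2})_{i},\ldots,(B_{\cdot m_{k}})_{i}\bigr)$, whose $j$th entry is the $i$th coordinate of $B_{\cdot j}$. A coordinate of a multivariate Gaussian vector is univariate Gaussian, so $(B_{\cdot j})_{i}\sim\N\!\bigl(M[k]_{ij},\,\lambda_{k,j}(\Sig_{k})_{ii}\bigr)$, and $(B_{\cdot 1})_{i},\ldots,(B_{\cdot m_{k}})_{i}$ inherit the mutual independence of $B_{\cdot 1},\ldots,B_{\cdot m_{k}}$. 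Hence $B_{i\cdot}$ is a vector of independent univariate Gaussians, and as such a Gaussian vector with mean $M[k]_{i\cdot}$ and diagonal covariance whose $(j,j)$ entry equals $\lambda_{k,j}(\Sig_{k})_{ii}=(\Sig_{k})_{ii}(\Psi_{k})_{jj}$; that is, $B_{i\cdot}\sim\N_{m_{k}}\!\bigl(M[k]_{i\cdot},(\Sig_{k})_{ii}\Psi_{k}\bigr)$, which is (\ref{eq: defnm01}). Combined with the previous step this gives $A[k]\sim\N_{n_{k},m_{k}}(M[k],\Sig_{k},\Psi_{k})$. The same computation can be packaged via vectorisation: the block-diagonal covariance of $\vecc(B)$ assembled from the independent columns equals $\Psi_{k}\otimes\Sig_{k}$, a Kronecker product of two positive semidefinite matrices, so $B$ is a Gaussian matrix by (\ref{eq: vec2nm}) and the reasoning of Lemma~\ref{le: cond4sndm}.

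I do not expect a substantive difficulty: the statement is in essence a repackaging of the Gaussian-tensor hypothesis, in the spirit in which Lemma~\ref{le: cond4sndm} repackages the SND matrix condition. The most delicate step is purely notational, namely fixing the linearisation convention that turns the multi-index $(i_{1},\ldots,i_{k-1},i_{k+1},\ldots,i_{m})$ into the column index of $A[k]$ and then keeping the scalars $\lambda_{k,j}$ (hence $\Psi_{k}$) and the flattened mean $M[k]$ consistent with that convention; one should also, to stay within the paper's own definitions, justify that a family of independent univariate Gaussians is jointly Gaussian by invoking the linear-combination characterisation of normality (every linear combination of the entries of $B_{i\cdot}$ is a sum of independent univariate Gaussians, hence univariate Gaussian). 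A more structural alternative worth recording is to represent $\A$ as an affine image $\A=\M+Z\times_{1}A_{1}\times_{2}\cdots\times_{m}A_{m}$ of a standard normal tensor $Z$ with $A_{i}A_{i}^{\top}=\Sig_{i}$, to flatten it to $A[k]=M[k]+A_{k}\,Z[k]\,\bigl(\bigotimes_{i\neq k}A_{i}\bigr)^{\top}$ (remaining modes ordered as the flattening dictates), to note that $Z[k]$ is SND, and to apply Lemma~\ref{le: gdtran}; this route additionally requires the $m$-order analogues of Lemma~\ref{le: gd2sgd} and Theorem~\ref{th: cond4sndt}, recorded here only for $m\le 3$, so it is less self-contained than the direct verification above.
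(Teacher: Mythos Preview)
Your proposal is correct and matches the paper's approach: the paper's own proof is the single remark that the theorem is ``directly from Definition~\ref{def: sndtensor}'' (i.e.\ from the Gaussian-tensor definition around (\ref{eq:defguasst})), and you have simply supplied the routine verification of the row and column conditions (\ref{eq: defnm01})--(\ref{eq: defnm02}) that this remark leaves implicit. The diagonal second parameter $\Psi_{k}$ you obtain is coarser than the $\Om_{k}=\bigotimes_{i\neq k}\Sig_{i}$ the paper identifies later in Theorem~\ref{th: rt2rm} via the affine-transformation route you sketch at the end, but that sharper identification is neither claimed nor needed for the present statement.
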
 

\indent Theorem \ref{th: Gtflattern} is directly from Definition \ref{def: sndtensor}. \\  
\indent  Now we consider an $m$-order tensor $\A\in  \T({\rm I})$ of size $\rm{I}:=d_{1}\times d_{2}\times \ldots \times d_{m}$ and denote $a[k,j]$ the $j$th 
fibre of $\A$ along the $k$-mode where $k\in [m]$ and $j$ ranges from 1 to $N_{k}:=d_{1}d_{2}\ldots d_{m}/d_{k}$.  We call $\A$ a \emph{standard Gaussian tensor} if  
$a[k,j]\sim \N_{d_{k}}(0, I_{d_{k}})$ for each $k, j$, and denote $\A\sim \N_{\rm{I}} (0, I_{d_{1}}, \ldots, I_{d_{m}})$.  A random tensor $\A\in  \T({\rm I})$ is said to follow a 
\emph{Gaussian} (or \emph{normal}) distribution if $A_{[k,j]} \sim \N_{I_{k}} (M_{[k,j]}, \Sig_{k})$ for each $k,j$. The following result also applies to a general case. 
 
\begin{thm}\label{th: 3ordergt}
Let $\Y=(Y_{ijk})\in \R^{n_{1}\times n_{2}\times n_{3}}$ be a 3-order random tensor, $\M=(M_{ijk})\in \R^{n_{1}\times n_{2}\times n_{3}}$ be a constant tensor  
and $U_{k}\in \R^{n_{k}\times n_{k}}$ be invertible matrices. If  
\[ \Y = \M + \X \times_{1}U_{1}\times_{2}U_{2}\times_{3}U_{3} \textit{\ with\ }  \X\in \R^{n_{1}\times n_{2}\times n_{3}} \]  
where $\X$ is a standard normal tensor. Then $\A$ follows a Gaussian distribution with parameters $(\M, \Sig_{1}, \Sig_{2}, \Sig_{3})$ where 
$\Sig_{k}=U_{k}U_{k}^{\top}$. 
\end{thm} 

\begin{conj} 
A random tensor $\Y\in  \T_{m;n}$ follows a normal distribution $\Y \sim \N_{m;n}(\M, \Sig_{1},\ldots, \Sig_{m})$ iff  there exist some matrices $U_{k}$ ($k\in [m]$) 
such that  $\Y=\X\times_{1}U_{1}\times_{2}U_{2}\ldots \times_{m}U_{m}$ and $\X$ obeys a standard Gaussian distribution. 
\end{conj}
 
\begin{thm}\label{th: rt2rm} 
A random tensor $\Y\in  \T_{\rm{I}}$ follows a normal distribution $\Y \sim \N_{\rm{I}}(\M, \Sig_{1},\ldots, \Sig_{m})$ iff  
\[ \Y[k] \sim \N_{n_{k}, m_{k}}(\M_{k}, \Sig_{k}, \Om_{k}) \]
where $\Y[k]$ is the unfolding of $\Y$ along mode-$k$ and $\Om_{k}:=\Sig_{m}\otimes \ldots \otimes \Sig_{k+1}\otimes \Sig_{k-1}\ldots \otimes \Sig_{1}$ for each $k\in [m]$. 
\end{thm}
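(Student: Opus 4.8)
The plan is to reduce both directions to three facts already in hand: the $m$-order affine representation of a normal tensor (the theorem immediately preceding this one), Lemma~\ref{le: gd2sgd} expressing a Gaussian matrix through a SND matrix, and the $m$-order analogue of Theorem~\ref{th: cond4sndt} characterizing SND tensors. The one genuinely new ingredient is the compatibility of the mode-wise (Tucker) product with mode-$k$ unfolding, i.e.\ the $m$-order generalization of (\ref{eq:3ordergt02}): if $\Y=\M+\X\times_1 U_1\times_2 U_2\cdots\times_m U_m$, then for each $k\in[m]$
\[ \Y[k] \;=\; \M[k] + U_k^{\top}\,X[k]\,W_k^{\top}, \qquad W_k := U_m\otimes\cdots\otimes U_{k+1}\otimes U_{k-1}\otimes\cdots\otimes U_1, \]
where the mode-$k$ flattening is taken with the fibre-ordering convention that makes the Kronecker factors appear in the order prescribing $\Om_k$, and $\M_k$ abbreviates $\M[k]$.

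For the forward implication I would start from $\Y\sim\N_{\rm{I}}(\M,\Sig_1,\ldots,\Sig_m)$ and use the preceding theorem to write $\Y=\M+\X\times_1 U_1\times_2 U_2\cdots\times_m U_m$ with $\X$ standard Gaussian and $U_k^{\top}U_k=\Sig_k$. Applying the displayed unfolding identity together with the $m$-order version of Theorem~\ref{th: cond4sndt} (which gives that $X[k]$ is a SND matrix), Lemma~\ref{le: gd2sgd} at once yields $\Y[k]\sim\N_{n_k,m_k}(\M_k,\,U_k^{\top}U_k,\,W_k^{\top}W_k)$; the Kronecker mixed-product rule then rewrites $W_k^{\top}W_k=(U_m^{\top}U_m)\otimes\cdots\otimes(U_1^{\top}U_1)=\Sig_m\otimes\cdots\otimes\Sig_{k+1}\otimes\Sig_{k-1}\otimes\cdots\otimes\Sig_1=\Om_k$, which is the assertion.

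For the converse it suffices to use the hypothesis for a single mode, say $k=1$. From $\Y[1]\sim\N_{n_1,m_1}(\M_1,\Sig_1,\Om_1)$ I would factor $\Sig_1=A_1A_1^{\top}$ and $\Om_1=BB^{\top}$ with $B=A_m\otimes\cdots\otimes A_2$, each $A_j$ a nonsingular square root of $\Sig_j$ (legitimate by the Kronecker structure of $\Om_1$), and then apply Lemma~\ref{le: gd2sgd} to obtain a SND matrix $Z$ with $\Y[1]=A_1 Z B^{\top}+\M_1$. Folding $Z$ back into the mode-$1$ slot produces a random tensor $\Z$ with $Z=Z[1]$; since $\vecc(\Z)$ is a coordinate permutation of $\vecc(Z)\sim\N(0,I)$, $\Z$ is a standard Gaussian tensor by the $m$-order analogue of Theorem~\ref{th: cond4sndt}. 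Refolding the matrix identity (the inverse of the displayed unfolding step) then gives $\Y=\M+\Z\times_1 A_1\times_2 A_2\cdots\times_m A_m$, whence $\Y\sim\N_{\rm{I}}(\M,\Sig_1,\ldots,\Sig_m)$ by the preceding theorem.

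The obstacle I expect is combinatorial rather than probabilistic: one must pin down the mode-$k$ flattening convention so that $\vecc(\Y[k])$ is the intended permutation of $\vecc(\Y)$, so that this permutation conjugates $\Sig_m\otimes\cdots\otimes\Sig_1$ into $\Om_k\otimes\Sig_k$, and so that the Kronecker factors of $W_k$ (and of $B$) line up in exactly the order dictating $\Om_k$ as displayed in the statement --- note that the $m=3$ computation in Theorem~\ref{th: 3ordergt} stacks the two remaining modes in the opposite order, so the convention here must be chosen to match. Once the conventions are fixed, every remaining step is a direct call to Lemma~\ref{le: gd2sgd}, Theorem~\ref{th: cond4sndt}, and the Kronecker mixed-product rule. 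A minor secondary point is the degenerate case where some $\Sig_k$ is only positive semidefinite: one then replaces the nonsingular square roots by rectangular ones (adjusting the size of the auxiliary SND tensor $\Z$) or argues by a limiting argument.
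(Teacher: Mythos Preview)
Your proposal is correct and in fact more detailed than the paper's own argument. The paper's proof is a two-line sketch: it records the base cases $m=1,2$ and then asserts that ``using the unfolding of tensor $\Y$ and induction on $m$, we can easily get the result.'' You instead give a direct (non-inductive) argument that generalizes the $m=3$ computation of Theorem~\ref{th: 3ordergt} to arbitrary order: write $\Y$ in its affine Tucker form via the preceding theorem, push this through the mode-$k$ unfolding identity, and read off the matrix-normal parameters from Lemma~\ref{le: gd2sgd} and the Kronecker mixed-product rule; for the converse you fold a single mode back up. What your route buys is an explicit identification of the one nontrivial step --- the unfolding identity $\Y[k]=\M[k]+U_k^{\top}X[k]W_k^{\top}$ with the Kronecker factor $W_k$ in the correct order --- and a clear flag that the ordering convention must be reconciled with the one used in Theorem~\ref{th: 3ordergt} (where $\Om_1$ comes out as $\Sig_2\otimes\Sig_3$ rather than $\Sig_3\otimes\Sig_2$). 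The paper's inductive phrasing hides this bookkeeping entirely; your approach exposes it and handles it, and your remark on the semidefinite case is a genuine addition.
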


\begin{proof} This is true for $m=1,2$ by the result on random vector and random matrix cases.  Using the unfolding of tensor $\Y$ and induction on $m$, we easily get the result. 
\end{proof}   

\indent  Let $\X\sim \N_{\rm{I}}(0, I_{d_{1}},\ldots, I_{d_{m}} )$ be a random following a standard normal distribution(SND). The density function of $\X$ is defined by 
\[  f_{\X} (\T) = (2\pi)^{-\frac{1}{2} d} \exp(-\frac{1}{2}\seq{\T, \T})  \] 
where $d=d_{1}d_{2}\ldots d_{m}$. 
\begin{thm}
Let $\X\sim \N_{\rm{I}}(0, I_{d_{1}},\ldots, I_{d_{m}} )$ be a random following a SND. Then the CF of $\X$ is 
\[  \phi_{\X}(\T) = \exp(-\frac{1}{2}\seq{\T, \T}) \]
where $\T=(T_{i_{1}\ldots i_{m}})\in \R^{d_{1}\times d_{2}\times \ldots \times d_{m}}$. 
\end{thm}
  
\begin{proof}  By definition of CF, we have 
\beyy 
\phi_{\X}(\T) &=& E[\exp(\imath \seq{\T, \X})] \\
                    &=& E[\exp(\imath \sum\limits_{i_{1},\ldots, i_{m}} T_{i_{1}\ldots i_{m}} X_{i_{1}\ldots i_{m}} )] \\
                    &=& E[\prod\limits_{i_{1},\ldots, i_{m}} \exp(\imath (T_{i_{1}\ldots i_{m}} X_{i_{1}\ldots i_{m}} ))] \\
                    &=& \prod\limits_{i_{1},\ldots, i_{m}} E[\exp(\imath (T_{i_{1}\ldots i_{m}} X_{i_{1}\ldots i_{m}} ))] \\
                    &=& \prod\limits_{i_{1},\ldots, i_{m}} \exp(-\frac{1}{2} T_{i_{1}\ldots i_{m}} ^{2}) \\
                    &=& \exp[-\frac{1}{2} \sum\limits_{i_{1},\ldots, i_{m}} T_{i_{1}\ldots i_{m}} ^{2}] \\ 
                    &=& \exp [-\frac{1}{2} \seq{\T, \T}]
\eeyy
\end{proof}


\section*{Compliance with ethical standards}
\textbf{Conflicts of interest}   On behalf of all authors, the corresponding author states that there is no conflict of interest. 

\vskip 50pt


\end{document}